\newcommand{\R}{\mathbb{R}}
 \let\a\relax
\newcommand{\a}{\bm{a}}
\let\mc\relax
\newcommand{\mc}{\mathcal}
\newcommand{\ve}{\varepsilon}
\DeclareMathOperator{\Dom}{Dom}
\DeclareMathOperator{\Ker}{Ker}
\DeclareMathOperator{\dist}{dist}
\theoremstyle{definition}
\newtheorem{thm}{Theorem}[section]
\newtheorem{defi}[thm]{Definition}
\newtheorem{prop}[thm]{Proposition}
\newtheorem{lem}[thm]{Lemma}
\newtheorem{cor}[thm]{Corollary}
\newtheorem{ex}[thm]{Example}
\newtheorem{rem}[thm]{Remark}
\title{ \sf
An Inverse Theorem for the Perron--Frobenius Theorem
}
\date{}
\author[1]{Shunsuke Tomioka\thanks{tomioka.shunsuke.t6@elms.hokudai.ac.jp}}
\affil[1]{Department of Mathematics,  Hokkaido University

Sapporo 060-0810,  Japan}
\begin{document}

\maketitle

\begin{abstract}
The Perron--Frobenius theorem in infinite-dimensional Hilbert spaces can be breifly stated as follows:
Given a Hilbert cone in a real Hilbert space, a bounded positive self-adjoint operator $A$ is ergodic with respect to this cone if and only if the maximum eigenvalue $\|A\|$ of $A$ is simple, and the corresponding eigenvector is strictly positive with respect to this cone.
This paper addresses the inverse problem of the Perron--Frobenius theorem: Does there exist a Hilbert cone such that a given bounded positive self-adjoint operator $A$ becomes ergodic when its maximum eigenvalue $\|A\|$ is simple? We provide an affirmative answer to this question in this paper. Furthermore, we conduct a detailed analysis of a specialized Hilbert cone introduced to obtain this result. Additionally, we provide an illustrative example of an application of the obtained results to the heat semigroup generated by the magnetic Schr\"{o}dinger operator.\\

\noindent
{\it MSC 2020:} 47D03, 20M99,\\
{\it Key words:} The Perron--Frobenius Theorem, Hilbert cones, Heat semigroups, Ergodicity
\end{abstract}

\section{Introduction}

The Perron--Frobenius theorem is well-known as a theorem stating a sufficient condition for a real square matrix to have a simple maximum eigenvalue and finds widespread application across various domains. Originally proven by Perron in 1907 \cite{Perron1907} for positive matrices, it was later generalized by Frobenius in 1912 \cite{frobenius1912} to the case of irreducible non-negative matrices, hence bearing the names of both mathematicians. The first generalization of this theorem to the infinite-dimensional case was accomplished by Jentzsch in 1912 \cite{Jentzsch1912}.  Furthermore, a more general formulation in terms of convex cones in Banach spaces was provided by Krein and Rutman in 1948 \cite{krein1948}.
\medskip

Subsequently, the Perron--Frobenius theorem has been extended in various forms to operators on infinite-dimensional Hilbert spaces, continuously attracting the interest of many mathematicians \cite{Bonsall1958, Faris1972, Nussbaum1998, Schaefer1974}.
In diverse fields utilizing functional analysis such as mathematical physics and nonlinear differential equations theory, numerous applications have been made employing these extended versions of the Perron--Frobenius theorem. Examples include \cite{deimling1985, GREINER, Mallet2010, Miyao2019-2, MIYAO2021, Reed1978}. For further references, one may consult the citations within these works.
\medskip

In this paper, we adopt the framework provided by Faris for the extension of the Perron--Frobenius theorem to infinite-dimensional Hilbert spaces \cite{Faris1972}. The results from \cite{Faris1972} can be briefly summarized as follows (the precise statement will be given in the subsequent section):
Given a real Hilbert space $\mathcal{H}$ with a Hilbert cone $\mathcal{C}$, the following equivalence holds: a bounded positive self-adjoint operator $A$ on $\mathcal{H}$ is ergodic with respect to $\mathcal{C}$ if and only if the maximum eigenvalue $\|A\|$ of $A$ is simple, and the corresponding eigenvector is strictly positive with respect to $\mathcal{C}$.
\medskip

This paper investigates the inverse problem of the Perron--Frobenius theorem as described above. Specifically, we examine the question of whether there exists a Hilbert cone such that a given bounded positive self-adjoint operator $A$ becomes ergodic when its maximum eigenvalue $\|A\|$ is simple. We then provide an affirmative answer to this question. Furthermore, we delve into the properties of a particular class of Hilbert cones introduced to address this question.
\medskip

The structure of this paper is as follows:
In Section \ref{Sec2}, we provide definitions and properties of fundamental terms: positivity preservation, positivity improvement, and ergodicity. We also present the Perron--Frobenius theorem for bounded operators on infinite-dimensional Hilbert spaces.
Section \ref{Sec3} outlines the main results of this paper. We establish an   affirmative answer  to the inverse problem of the Perron--Frobenius theorem (Theorem \ref{InvPF}). 
Additionally, within this section, we delve into several considerations regarding a specific class of Hilbert cones introduced to derive these answers.
In Section \ref{Sec4}, we delve into various applications of the results obtained in Section \ref{Sec3} to heat semigroups. Furthermore, we discuss an illustrative example of an application concerning the heat semigroup generated by the magnetic Schr\"{o}dinger operator.
Appendix \ref{SecApp} covers fundamental aspects of self-adjoint operators on real Hilbert spaces. Given a real Hilbert space, we consider its complexification. In doing so, a natural extension of operators on the real Hilbert space to operators on the complexified Hilbert space is defined. Through this correspondence, various properties of operators on real Hilbert spaces naturally correspond to properties of operators on the complexified Hilbert space.  As comprehensive literature specifically addressing this topic seems limited, we compile relevant properties related to the content of this paper for the sake of completeness.

\subsection*{Acknowledgements}
I am deeply grateful to Professor Tadahiro Miyao for his guidance throughout this research and his assistance in proofreading the English manuscript.
I also thank him for including Example \ref{ex} as an example of an application of the main result of this paper. 

\vspace{2mm}

\noindent
{\bf Data Availability}\\
 Data sharing not applicable to this article as no datasets were generated or analysed during
the current study.
\vspace{2mm}

\noindent
{\bf Financial interests}\\
 The authors declare they have no financial interests.

\section{Preliminaries}\label{Sec2}
\subsection{Hilbert Cones}

In this subsection, we shall elucidate the fundamental aspects regarding Hilbert cones.

\begin{defi}\label{DefHC}
Let $\mathcal{H}$ denote a real Hilbert space. A non-empty subset $\mathcal{C}$ of $\mathcal{H}$ is termed a \textit{Hilbert cone} if it satisfies the following conditions (i)-(iv):
\begin{enumerate}
\item $\mathcal{C}$ is a convex cone in $\mathcal{H}$, i.e., $\mathcal{C} + \mathcal{C} \subset \mathcal{C}$, $[0, \infty) \cdot \mathcal{C} \subset \mathcal{C}$, and $\mathcal{C} \cap (-\mathcal{C}) = \{ 0 \}$.
\item $\mathcal{C}$ is a closed subset of $\mathcal{H}$.
\item For any $u, v \in \mathcal{C}$, it holds that $\langle u, v \rangle \geq 0$.
\item For any $w \in \mathcal{H}$, there exist $u, v \in \mathcal{C}$ such that $w = u - v$ and $\langle u, v \rangle = 0$.
\end{enumerate}
\end{defi}

\begin{defi}\label{StrictlyPositiveVector}
Let $\mathcal{H}$ denote a real Hilbert space and $\mathcal{C}$ denote a Hilbert cone.
A vector $u \in \mathcal{C} \setminus \{ 0 \}$ is termed a \textit{strictly positive vector with respect to $\mathcal{C}$} if for any $v \in \mathcal{C} \setminus \{ 0 \}$, it holds that $\langle v, u \rangle > 0$.
The set consisting of all strictly positive vectors with respect to $\mathcal{C}$ is denoted by $\mathcal{C}_{>0}$: $
\mathcal{C}_{>0} \coloneqq \{\, u \in \mathcal{C} \setminus \{ 0 \} \mid \langle v, u \rangle > 0 \ \mbox{for all $v \in \mathcal{C} \setminus \{ 0 \}$}\}.$
\end{defi}

Below, we demonstrate that the Hilbert cone can be characterized as a self-dual cone.

\begin{defi}
Let $\mathcal{H}$ denote a real Hilbert space.
For any subset $\mathcal{M}$ of $\mathcal{H}$, define $\mathcal{M}^{\dagger} \coloneqq \{\, v \in \mathcal{H} \mid \langle v, u \rangle \geq 0\ \text{for all } u \in \mathcal{M} \}$.
\end{defi}

\begin{prop}\it 
Let $\mathcal{H}$ be a real Hilbert space and $\mathcal{C}$ be a non-empty convex cone in $\mathcal{H}$.
Then, the condition $\mathcal{C}^{\dagger} = \mathcal{C}$ serves as both necessary and sufficient for $\mathcal{C}$ to be a Hilbert cone of $\mathcal{H}$.
\end{prop}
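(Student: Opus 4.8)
The plan is to prove the two implications separately, working throughout with the self-duality operation $\mathcal{M} \mapsto \mathcal{M}^\dagger$. First I would record the elementary monotonicity fact that $\mathcal{M}_1 \subset \mathcal{M}_2$ implies $\mathcal{M}_2^\dagger \subset \mathcal{M}_1^\dagger$, and that $\mathcal{M}^\dagger$ is always a closed convex cone (being an intersection of the closed half-spaces $\{v : \langle v,u\rangle \ge 0\}$ over $u \in \mathcal{M}$). These will be used freely.

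For the \emph{necessity} direction, suppose $\mathcal{C}$ is a Hilbert cone. Condition (iii) of Definition \ref{DefHC} says exactly that $\langle u,v\rangle \ge 0$ for all $u,v \in \mathcal{C}$, i.e.\ $\mathcal{C} \subset \mathcal{C}^\dagger$. For the reverse inclusion, take $w \in \mathcal{C}^\dagger$; by condition (iv) write $w = u - v$ with $u,v \in \mathcal{C}$ and $\langle u,v\rangle = 0$. Then since $v \in \mathcal{C}$ and $w \in \mathcal{C}^\dagger$ we get $0 \le \langle w, v\rangle = \langle u,v\rangle - \langle v,v\rangle = -\|v\|^2$, forcing $v = 0$ and hence $w = u \in \mathcal{C}$. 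Thus $\mathcal{C}^\dagger = \mathcal{C}$.

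For the \emph{sufficiency} direction, assume $\mathcal{C}^\dagger = \mathcal{C}$ for a non-empty convex cone $\mathcal{C}$. Conditions (ii) and (iii) are immediate: $\mathcal{C} = \mathcal{C}^\dagger$ is closed by the remark above, and $\langle u,v\rangle \ge 0$ for $u,v \in \mathcal{C} = \mathcal{C}^\dagger$ is the defining property of $\mathcal{C}^\dagger$. The pointedness clause $\mathcal{C}\cap(-\mathcal{C}) = \{0\}$ in (i) follows because $w \in \mathcal{C}\cap(-\mathcal{C})$ gives both $w, -w \in \mathcal{C} = \mathcal{C}^\dagger$, so $\langle w,w\rangle \ge 0$ and $\langle -w, w\rangle \ge 0$, i.e.\ $\|w\|^2 = 0$. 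The substantive point is condition (iv): given $w \in \mathcal{H}$, I must produce an orthogonal decomposition $w = u - v$ with $u,v \in \mathcal{C}$. The natural candidate is $u = P_{\mathcal{C}}w$, the metric projection of $w$ onto the nonempty closed convex set $\mathcal{C}$, and $v = u - w$. One shows via the variational characterization of the projection onto a closed convex cone — namely $\langle w - P_{\mathcal{C}}w,\, z\rangle \le 0$ for all $z \in \mathcal{C}$ and $\langle w - P_{\mathcal{C}}w,\, P_{\mathcal{C}}w\rangle = 0$ (Moreau's decomposition) — that $v = -(w - u) \in \mathcal{C}^\dagger = \mathcal{C}$ and $\langle u,v\rangle = 0$.

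The main obstacle is the last step: establishing Moreau's decomposition theorem, i.e.\ that projection onto the self-dual cone $\mathcal{C}$ yields the required orthogonal splitting. This requires knowing that $\mathcal{H}$ being a real Hilbert space guarantees existence and uniqueness of the metric projection onto a closed convex set, and then verifying the obtuse-angle and orthogonality relations for the cone case. Everything else is a short manipulation of inner products. I would either cite the Hilbert projection theorem and Moreau's theorem directly, or include a self-contained two-line derivation: for $t \ge 0$ and $z \in \mathcal{C}$, minimality of $\|w - u\|^2$ against the competitor $u + tz \in \mathcal{C}$ gives $\langle w - u, z\rangle \le \tfrac{t}{2}\|z\|^2$; letting $t \downarrow 0$ yields $\langle w-u, z\rangle \le 0$, and taking $z = u$ together with the competitor $(1\pm t)u$ pins down $\langle w-u,u\rangle = 0$.
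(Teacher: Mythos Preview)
Your proof is correct and follows essentially the same route as the paper's: the necessity direction is identical (condition (iii) gives $\mathcal{C}\subset\mathcal{C}^\dagger$, then condition (iv) plus $0\le\langle w,v\rangle=-\|v\|^2$ gives the reverse inclusion), and for sufficiency you use the metric projection $u=P_{\mathcal{C}}w$ together with the competitor arguments $u+tz$ and $(1\pm t)u$, exactly as the paper does. Your extra verification of pointedness $\mathcal{C}\cap(-\mathcal{C})=\{0\}$ is harmless but unnecessary here, since the proposition already assumes $\mathcal{C}$ is a convex cone in the sense of Definition~\ref{DefHC}(i).
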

\begin{proof}
In the context of complex Hilbert spaces, the assertion of this proposition is well-known \cite{Bratteli1987, Miyao2019}. For the convenience of the reader, we provide the proof of the statement in the context of real Hilbert spaces here.

If $\mathcal{C}$ is a Hilbert cone, then by condition (iii) of Definition \ref{DefHC}, we have $\mathcal{C} \subset \mathcal{C}^{\dagger}$. Moreover, for any $w \in \mathcal{C}^{\dagger}$, there exist $u, v \in \mathcal{C}$ such that $w = u - v$ and $\langle u, v \rangle = 0$ by condition (iv) of Definition \ref{DefHC}. Hence, from $0 \leq \langle w, v \rangle = - \langle v, v \rangle$, we conclude that $v=0$, i.e., $w \in \mathcal{C}$.

If $\mathcal{C}$ is self-dual, we now demonstrate that it satisfies conditions (ii), (iii), and (iv) of Definition \ref{DefHC}.
Given that $\mathcal{C}^{\dagger} = \mathcal{C}$, conditions (ii) and (iii) hold true due to the closedness of $\mathcal{C}^{\dagger}$ and the definition of $\mathcal{C}^{\dagger}$. To establish condition (iv), let $w \in \mathcal{H}$ be arbitrary. Since $\mathcal{C}$ is a closed convex set, by the nearest point theorem, there exists $u \in \mathcal{C}$ such that $\|w - u\| = \text{dist}(w, \mathcal{C})$. Taking $v = u - w$, for any $t > 0$ and any $u' \in \mathcal{C}$, we have $u + tu' \in \mathcal{C}$, hence
\begin{align*}
\|w - u\|^2 \leq \|w - (u + tu')\|^2 = \|w - u\|^2 - 2t \langle w - u, u'\rangle + t^2 \|u'\|^2.
\end{align*}
This simplifies to $- \frac{t}{2} \|u'\|^2 \leq \langle v, u' \rangle$. Taking $t \to 0$, we obtain $0 \leq \langle v, u' \rangle$, implying $v \in \mathcal{C}^{\dagger} = \mathcal{C}$. Additionally, for any $-1 < t < 0$, we have $(1 + t)u \in \mathcal{C}$, hence
\begin{align*}
\|w - u\|^2 \leq \|w - (1 + t)u\|^2 = \|w - u\|^2 - 2t \langle w - u, u\rangle + t^2 \|u\|^2.
\end{align*}
This simplifies to $\langle v, u\rangle \leq - \frac{t}{2} \|u\|^2$. Taking $t \to 0$, we obtain $\langle v, u\rangle \leq 0$. Combining this with $\langle v, u\rangle \geq 0$, we conclude $\langle v, u\rangle = 0$.
\end{proof}

	\subsection{Positivity Preserving Operators, Positivity Improving Operators, Ergodicity}

In this subsection, we discuss the properties of operators determined by a given Hilbert cone. These concepts are employed to formulate Perron--Frobenius theorem in the case of infinite-dimensional real Hilbert spaces.

\begin{defi}
Let $\mathcal{H}$ denote a real Hilbert space, $\mathcal{C}$ be a Hilbert cone in $\mathcal{H}$, and $A \in \mathcal{B}(\mathcal{H})$. Here, $\mathcal{B}(\mathcal{H})$ represents the set of bounded linear operators on $\mathcal{H}$.
\begin{enumerate}
    \item If $A\mathcal{C} \subset \mathcal{C}$ holds true, $A$ is termed a \textit{positivity preserving operator  with respect to $\mathcal{C}$}.
    \item If $A(\mathcal{C} \setminus \{ 0 \}) \subset \mathcal{C}_{>0}$ holds true, $A$ is termed a \textit{positivity improving operator with respect to $\mathcal{C}$}.
\end{enumerate}
\end{defi}

\begin{defi}\label{ergodicity}
Let $\mathcal{H}$ be a real Hilbert space and $\mathcal{C}$ be a Hilbert cone in $\mathcal{H}$.

\begin{enumerate}
\item A subset $\mathcal{E}$ of $\mathcal{B}(\mathcal{H})$ is said to be \textit{ergodic with respect to $\mathcal{C}$} if it satisfies the following condition: for any $u, v \in \mathcal{C} \setminus \{ 0 \}$, there exists $A \in \mathcal{E}$ such that $\langle u, Av \rangle > 0$.

\item An operator $A \in \mathcal{B}(\mathcal{H})$ is said to be \textit{ergodic with respect to $\mathcal{C}$} if the sequence $\{ A^n \}_{n \in \mathbb{N}}$ is ergodic with respect to $\mathcal{C}$. Here, $\mathbb{N}$ denotes the set of natural numbers.
\end{enumerate}
\end{defi}

Positivity improving operators are ergodic. More generally, if the weak closure $\overline{\mathcal{E}}^w$ of a subset $\mathcal{E} \subset \mathcal{B}(\mathcal{H})$ contains positivity  improving operators, then $\mathcal{E}$ is ergodic. Indeed, for any positivity  improving operator $A \in \overline{\mathcal{E}}^w$, we can choose a net $(A_j)_{j \in J}$ in $\mathcal{E}$ such that $A_j \xrightarrow{w} A$. Accordingly, for any $u, v \in \mathcal{C} \setminus \{ 0 \}$, there exists $j_0 \in J$ such that $\langle u, A_j v \rangle > 0$ for all $j \geq j_0$.

\begin{thm}[Perron--Frobenius Theorem ({\cite[Theorem 1]{Faris1972}})]\label{Perron--Frobenius} \it
Let $\mathcal{H}$ be a real Hilbert space, $\mathcal{C}$ be a Hilbert cone in $\mathcal{H}$, and $A \in \mathcal{B}(\mathcal{H})$ be a positive self-adjoint operator that preserves positivity with respect to $\mathcal{C}$. Additionally, suppose $\|A\|$ is an eigenvalue of $A$. Then, the following statements {\rm (i)} and {\rm (ii)} are equivalent:
\begin{enumerate}
\item[\rm (i)] $A$ is ergodic with respect to $\mathcal{C}$.
\item[\rm (ii)] There exists a strictly  positive vector $w \in \mathcal{C}_{>0}$ such that $\Ker(\norm{A}I - A) = \mathbb{R}w$, where $I$ denotes the identity operator, $\Ker T$ denotes the kernel of a given operator $T$, and $\mathbb{R}$ represents the set of real numbers.
\end{enumerate}
\end{thm}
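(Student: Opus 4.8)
The plan is to prove the equivalence by a chain of arguments that ultimately reduces the ergodicity of the single operator $A$ to spectral-theoretic statements about $A$, exploiting both the positivity and the self-adjointness hypotheses. First I would set up notation: write $\lambda = \|A\|$, let $P$ denote the spectral projection of $A$ onto the eigenspace $\mathrm{Ker}(\lambda I - A)$, and observe (via the spectral theorem together with $\|A\|$ being an eigenvalue) that $A^n/\lambda^n \to P$ in the appropriate topology along a suitable averaging — more precisely, since $\lambda$ is the top of the spectrum, the Cesàro means $N^{-1}\sum_{n=1}^{N}(A/\lambda)^n$ converge strongly (indeed weakly suffices here) to $P$. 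This is the key ``averaging'' device that links the family $\{A^n\}$ to the single operator $P$.

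For the direction (ii) $\Rightarrow$ (i): assume $\mathrm{Ker}(\lambda I - A) = \mathbb{R}w$ with $w \in \mathcal{C}_{>0}$; then $P = \la w,\cdot\ra w / \|w\|^2$ is positivity improving, since for $u,v \in \mathcal{C}\setminus\{0\}$ we get $\la u, Pv\ra = \la u,w\ra\la w,v\ra/\|w\|^2 > 0$ by strict positivity of $w$. Because $P$ lies in the weak closure of the convex hull of $\{(A/\lambda)^n\}$ (via the Cesàro means), the remark preceding the theorem — that a family whose weak closure contains a positivity improving operator is ergodic — applied to the family $\{A^n\}$ (scaling by $\lambda^{-n}>0$ is harmless for the sign of $\la u, A^n v\ra$) yields that $A$ is ergodic. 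I would need to be slightly careful that the remark was stated for weak closure of $\mathcal{E}$ itself rather than its convex hull, so I would instead argue directly: if $\la u, A^n v\ra = 0$ for all $n$, then $\la u, Pv\ra = 0$ by the Cesàro convergence, a contradiction.

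For the direction (i) $\Rightarrow$ (ii): this is the substantive part and where I expect the main obstacle. First, positivity preservation forces $P\mathcal{C} \subset \mathcal{C}$ (as a weak limit of Cesàro averages of the $A^n$, each of which preserves $\mathcal{C}$, and $\mathcal{C}$ is closed and convex, hence weakly closed). Next, ergodicity of $A$ should be upgraded to: for all $u,v\in\mathcal{C}\setminus\{0\}$, $\la u, Pv\ra > 0$, i.e. $P$ is positivity improving — this requires showing that if $\la u, A^n v\ra > 0$ for some $n$ then the Cesàro limit is strictly positive, which follows because $\la u, A^n v\ra \geq 0$ for all $n$ (positivity preservation) so no cancellation occurs in the average. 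Now I must show $\dim \mathrm{Ran}\,P = 1$ and that a generator lies in $\mathcal{C}_{>0}$. Pick any $\psi \in \mathrm{Ran}\,P\setminus\{0\}$; using the self-duality decomposition $\psi = \psi_+ - \psi_-$ with $\psi_\pm \in \mathcal{C}$, $\la\psi_+,\psi_-\ra=0$, one shows $P\psi_+$ and $P\psi_-$ are both in $\mathcal{C}$ and are eigenvectors; positivity improvement of $P$ combined with $\la\psi_+,\psi_-\ra = 0$ must be shown to force one of $\psi_\pm$ to vanish, so every eigenvector is (up to sign) in $\mathcal{C}$. Two eigenvectors $w_1,w_2\in\mathcal{C}$ then satisfy $\la w_1,w_2\ra>0$ (positivity improvement), and a standard argument — if they were linearly independent, form $w_1 - t w_2$ with $t$ chosen so this is a nonzero eigenvector changing sign, contradicting that all eigenvectors lie in $\pm\mathcal{C}$ — yields $\dim\mathrm{Ran}\,P=1$. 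Finally, the generator $w$ satisfies $w\in\mathcal{C}$ and, since $Pv = \la w,v\ra w/\|w\|^2$ and $P$ is positivity improving, $\la w,v\ra>0$ for all $v\in\mathcal{C}\setminus\{0\}$, i.e. $w\in\mathcal{C}_{>0}$. The delicate points I anticipate are: justifying the weak convergence of Cesàro means to $P$ in the real-Hilbert-space setting (handled by passing to the complexification as in the appendix, where the spectral theorem is available), and the ``rigidity'' step that positivity improvement of $P$ together with an orthogonality relation forces a summand to vanish — this is the genuine Perron–Frobenius mechanism and must be argued carefully rather than by analogy with the matrix case.
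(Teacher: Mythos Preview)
The paper does not give its own proof of this theorem: it is stated with a citation to Faris \cite[Theorem 1]{Faris1972} and used as background. So there is no in-paper argument to compare against, and I will comment only on the soundness of your outline.

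The direction (ii)$\Rightarrow$(i) is fine. Note that since $A\ge 0$ and $\sigma(A)\subset[0,\lambda]$, already $(A/\lambda)^n\to P$ strongly by dominated convergence in the spectral integral; Ces\`aro means are not needed. Your contrapositive argument (if $\la u, A^n v\ra = 0$ for all $n$ then $\la u, Pv\ra=0$, contradicting strict positivity of $w$) is correct.

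The direction (i)$\Rightarrow$(ii) has a genuine gap at the step ``$P$ is positivity improving''. Your justification is that a single term $\la u, A^{n_0}v\ra>0$ plus nonnegativity of all other terms forces the Ces\`aro limit $\la u, Pv\ra$ to be strictly positive. This is false: one strictly positive term among nonnegative terms, divided by $N$, contributes nothing in the limit. Concretely, nothing in your argument rules out $\la u,(A/\lambda)^n v\ra\to 0$.

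The fix is to apply ergodicity not to $v$ but to $Pv$. You already established $P\mathcal{C}\subset\mathcal{C}$. Suppose $v\in\mathcal{C}\setminus\{0\}$ and $Pv\neq 0$; then for any $u\in\mathcal{C}\setminus\{0\}$ ergodicity gives an $n$ with $\la u, A^n(Pv)\ra>0$, and since $A^n Pv=\lambda^n Pv$ this reads $\lambda^n\la u, Pv\ra>0$, hence $\la u, Pv\ra>0$. Thus every nonzero vector in $P\mathcal{C}$ is strictly positive. Combined with your decomposition argument for an eigenvector $\psi=\psi_+-\psi_-$ (where now $\|\psi\|^2=\|\psi_+\|^2+\|\psi_-\|^2$ must equal $\|P\psi_+ - P\psi_-\|^2\le \|\psi_+\|^2+\|\psi_-\|^2-2\la P\psi_+,P\psi_-\ra$, forcing one of $P\psi_\pm$ to vanish), the rest of your outline goes through.
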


\begin{rem}
In particular, under the hypotheses of Theorem \ref{Perron--Frobenius}, if $A$ is positivity-improving, then the kernel of $\|A\|I - A$ is spanned by a unique strictly positive vector.
\end{rem}

In the subsequent section, we address the inverse problem of this theorem, namely, when a bounded positive self-adjoint operator $A$ possesses $\norm{A}$ as a simple eigenvalue, whether there exists a Hilbert cone such that $A$ preserves (improves) positivity with respect to that Hilbert cone.

\section{Main Results}\label{Sec3}

\subsection{One Solution to the Inverse Problem of the Perron--Frobenius Theorem}

In this subsection, we demonstrate that when the maximal eigenvalue of a bounded positive self-adjoint operator on a real Hilbert space is simple, the operator becomes a positivity  improving operator with respect to a Hilbert cone determined by the eigenvector corresponding to the maximal eigenvalue. We first define the Hilbert cone used in the main result of this paper:

\begin{defi}
Let $\mathcal{H}$ be a real Hilbert space, and $u_0$ be a unit vector in $\mathcal{H}$.
The cone $\mathcal{P}(u_0)$, defined as
\begin{align*}
				\mathcal{P}(u_0)
				\coloneqq
				\Set{ u \in \mc{H} | \langle u_0, u \rangle \geq \frac{\norm{u}}{\sqrt{2}} }
			\end{align*}
is called the \textit{cone with $u_0$ as its axis}.
\end{defi}

It can be readily verified that $\mathcal{P}(u_0)$ is a closed convex cone in $\mathcal{H}$. Furthermore, the following proposition holds:

\begin{prop}\it 
Let $\mathcal{H}$ be a real Hilbert space, and $u_0$ be a unit vector in $\mathcal{H}$.
Then, $\mathcal{P}(u_0)^{\dagger} = \mathcal{P}(u_0)$, implying that $\mathcal{P}(u_0)$ is a Hilbert cone in $\mathcal{H}$.
\end{prop}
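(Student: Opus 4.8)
The plan is to prove the two inclusions $\mc{P}(u_0) \subset \mc{P}(u_0)^\dagger$ and $\mc{P}(u_0)^\dagger \subset \mc{P}(u_0)$ separately; the conclusion that $\mc{P}(u_0)$ is a Hilbert cone then follows immediately from the self-duality characterization proved above. For the first inclusion, take $u, v \in \mc{P}(u_0)$ and decompose each orthogonally along $u_0$: write $u = \la u_0, u\ra u_0 + u^\perp$ and $v = \la u_0, v\ra u_0 + v^\perp$, where $u^\perp, v^\perp \perp u_0$. Then $\la u, v\ra = \la u_0, u\ra \la u_0, v\ra + \la u^\perp, v^\perp\ra$, and by Cauchy--Schwarz $\la u^\perp, v^\perp\ra \geq -\norm{u^\perp}\norm{v^\perp}$. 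The defining inequality $\la u_0, u\ra \geq \norm{u}/\sqrt{2}$ together with $\norm{u}^2 = \la u_0, u\ra^2 + \norm{u^\perp}^2$ forces $\norm{u^\perp}^2 \leq \norm{u}^2/2 \leq \la u_0, u\ra^2$, i.e. $\norm{u^\perp} \leq \la u_0, u\ra$ (both sides nonnegative since $\la u_0, u\ra \geq 0$), and similarly $\norm{v^\perp} \leq \la u_0, v\ra$. Hence $\la u, v\ra \geq \la u_0, u\ra\la u_0, v\ra - \la u_0, u\ra\la u_0, v\ra = 0$, so $v \in \mc{P}(u_0)^\dagger$.

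For the reverse inclusion, suppose $w \in \mc{P}(u_0)^\dagger$, so $\la w, u\ra \geq 0$ for every $u \in \mc{P}(u_0)$. Decompose $w = \alpha u_0 + w^\perp$ with $w^\perp \perp u_0$ and $\alpha = \la u_0, w\ra$. The strategy is to feed cleverly chosen test vectors from $\mc{P}(u_0)$ into the inequality $\la w, u\ra \geq 0$. Taking $u = u_0 \in \mc{P}(u_0)$ gives $\alpha \geq 0$. To control $\norm{w^\perp}$, if $w^\perp \neq 0$ set $e = -w^\perp/\norm{w^\perp}$ and consider $u = u_0 + e \in \mc{P}(u_0)$: indeed $\la u_0, u\ra = 1$ and $\norm{u} = \sqrt{2}$, so $\la u_0, u\ra = \norm{u}/\sqrt{2}$ and $u$ lies on the boundary of the cone. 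Then $0 \leq \la w, u\ra = \alpha + \la w^\perp, e\ra = \alpha - \norm{w^\perp}$, which yields $\norm{w^\perp} \leq \alpha = \la u_0, w\ra$. Combined with $\norm{w}^2 = \alpha^2 + \norm{w^\perp}^2 \leq 2\alpha^2$, this gives $\la u_0, w\ra = \alpha \geq \norm{w}/\sqrt{2}$, i.e. $w \in \mc{P}(u_0)$. (If $w^\perp = 0$ the inequality $\la u_0, w\ra \geq \norm{w}/\sqrt{2}$ is immediate from $\alpha \geq 0$.)

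I do not expect a serious obstacle here; the argument is elementary once one commits to the orthogonal decomposition along $u_0$. The one point requiring a little care is the choice of the boundary test vector $u_0 + e$ in the second inclusion and checking it genuinely belongs to $\mc{P}(u_0)$ — this is where the specific constant $1/\sqrt{2}$ in the definition is used, and picking the "wrong" test vector (e.g. not on the boundary) would give a weaker bound. A minor edge case to mention explicitly is $w^\perp = 0$ (equivalently $u^\perp = 0$ or $v^\perp = 0$ in the first part), handled trivially as above. One should also note in passing that $\mc{P}(u_0)$ is nonempty (it contains $u_0$) and is a convex cone, as already asserted in the text, so that the self-duality proposition applies verbatim.
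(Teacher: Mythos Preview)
Your proof is correct and follows essentially the same approach as the paper's. The first inclusion is identical (orthogonal decomposition along $u_0$ plus Cauchy--Schwarz), and for the second inclusion you use the very same boundary test vector $u_0 - w^\perp/\norm{w^\perp}$ that the paper uses; the only cosmetic difference is that the paper argues by contrapositive (if $u\notin\mc{P}(u_0)$ exhibit $v\in\mc{P}(u_0)$ with $\la u,v\ra<0$, splitting into the cases $\la u_0,u\ra<0$ and $\la u_0,u\ra\geq 0$), whereas you argue directly and absorb the first case into the preliminary test $u=u_0$.
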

\begin{proof}
Let $u, v \in \mathcal{P}(u_0)$. We perform an orthogonal decomposition of $u$ and $v$ with respect to $u_0$: $u = \langle u_0, u \rangle u_0 + u_1$ and $v = \langle u_0, v \rangle u_0 + v_1$. Then, by noting that $\langle u_0, u \rangle \geq \|u\|/\sqrt{2} \geq \|u_1\|$ and $\langle u_0, v \rangle \geq \|v\|/\sqrt{2} \geq \|v_1\|$, we have:
\[
\langle u, v \rangle = \langle u_0, u \rangle \langle u_0, v \rangle + \langle u_1, v_1 \rangle \geq \langle u_0, u \rangle \langle u_0, v \rangle - \|u_1\| \|v_1\| \geq 0.
\]
Thus, $\mathcal{P}(u_0) \subset \mathcal{P}(u_0)^{\dagger}$.

Next, we show that $\mathcal{P}(u_0)^{\dagger} \subset \mathcal{P}(u_0)$, or equivalently, $\mathcal{H} \setminus \mathcal{P}(u_0) \subset \mathcal{H} \setminus \mathcal{P}(u_0)^{\dagger}$. Let $u \in \mathcal{H} \setminus \mathcal{P}(u_0)$ be arbitrary.

If $\langle u_0, u \rangle < 0$, then $u \in \mathcal{H} \setminus \mathcal{P}(u_0)^{\dagger}$ trivially follows from the definition of $\mathcal{P}(u_0)^{\dagger}$.

If $\langle u_0, u \rangle \geq 0$, let $\displaystyle v = u_0 - \frac{u - \langle u_0, u \rangle u_0}{\|u - \langle u_0, u \rangle u_0\|}$. Then, $v \in \mathcal{P}(u_0)$ and
\[
\langle u, v \rangle = \langle u, u_0 \rangle - \|u - \langle u_0, u \rangle u_0\| < \frac{\|u\|}{\sqrt{2}} - \frac{\|u\|}{\sqrt{2}} = 0,
\]
because $\langle u_0, u \rangle < \|u\|/\sqrt{2}$. Thus, $u \in \mathcal{H} \setminus \mathcal{P}(u_0)^{\dagger}$.
\end{proof}

 Any strictly positive vector in $\mathcal{P}(u_0)$ is characterized as an interior point of $\mathcal{P}(u_0)$:

\begin{prop}\it 
Let $\mathcal{H}$ denote a real Hilbert space, and $u_0 \in \mathcal{H}$ be a unit vector. Then, $\mathcal{P}(u_0)_{>0} = \mathcal{P}(u_0)^{\circ}$. Here, $\mathcal{P}(u_0)^{\circ}$ represents the interior of $\mathcal{P}(u_0)$.
\end{prop}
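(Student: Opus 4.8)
The plan is to pin down both sets by the same explicit description. Concretely, I would first establish that
\[
\mathcal{P}(u_0)^{\circ} = \left\{\, u \in \mathcal{H} : \langle u_0, u \rangle > \tfrac{\|u\|}{\sqrt{2}} \,\right\},
\]
and then show that the set on the right is exactly $\mathcal{P}(u_0)_{>0}$; together these give $\mathcal{P}(u_0)_{>0} = \mathcal{P}(u_0)^{\circ}$.

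For the interior, the inclusion ``$\supseteq$'' is just continuity: the map $u \mapsto \langle u_0, u\rangle - \|u\|/\sqrt{2}$ is continuous on $\mathcal{H}$, so its positivity set is open and, by definition, contained in $\mathcal{P}(u_0)$. For ``$\subseteq$'' I would show that a point $u \in \mathcal{P}(u_0)$ with $\langle u_0, u\rangle = \|u\|/\sqrt{2}$ cannot be interior: if $u = 0$ the vectors $-\varepsilon u_0$ ($\varepsilon>0$) lie outside $\mathcal{P}(u_0)$ and tend to $0$; if $u \neq 0$, perform the orthogonal decomposition $u = \langle u_0,u\rangle u_0 + u_1$ and note that the boundary equality $\|u\| = \sqrt{2}\,\langle u_0,u\rangle$ forces $\|u_1\| = \langle u_0, u\rangle > 0$, so the perturbed vectors $\langle u_0,u\rangle u_0 + (1+\varepsilon)u_1$ have the same $u_0$-component but strictly larger norm, hence leave $\mathcal{P}(u_0)$, while converging to $u$.

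Next I would identify that set with $\mathcal{P}(u_0)_{>0}$. For ``$\subseteq$'': given $u$ with $\langle u_0,u\rangle > \|u\|/\sqrt{2}$ and an arbitrary $v \in \mathcal{P}(u_0)\setminus\{0\}$, decompose $u = \langle u_0,u\rangle u_0 + u_1$ and $v = \langle u_0,v\rangle u_0 + v_1$, so that $\langle u, v\rangle = \langle u_0,u\rangle\langle u_0,v\rangle + \langle u_1, v_1\rangle \ge \langle u_0,u\rangle\langle u_0,v\rangle - \|u_1\|\|v_1\|$; the strict defining inequality for $u$ gives $\|u_1\| < \langle u_0,u\rangle$, membership of $v$ gives $\|v_1\| \le \langle u_0,v\rangle$, and $v\neq 0$ forces $\langle u_0,v\rangle \ge \|v\|/\sqrt{2} > 0$, so $\|u_1\|\|v_1\| < \langle u_0,u\rangle\langle u_0,v\rangle$ and hence $\langle u,v\rangle > 0$, i.e. $u \in \mathcal{P}(u_0)_{>0}$. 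For ``$\supseteq$'' I would argue by contraposition: if $u \in \mathcal{P}(u_0)\setminus\{0\}$ has $\langle u_0,u\rangle = \|u\|/\sqrt{2}$, then with $u_1$ as above ($\|u_1\| = \langle u_0,u\rangle > 0$) the vector $v \coloneqq u_0 - u_1/\|u_1\|$ satisfies $\langle u_0, v\rangle = 1 = \|v\|/\sqrt{2}$, so $v \in \mathcal{P}(u_0)\setminus\{0\}$, yet $\langle v, u\rangle = \langle u_0,u\rangle - \|u_1\| = 0$, so $u \notin \mathcal{P}(u_0)_{>0}$.

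The computations here are all elementary, so there is no serious obstacle; the only non-mechanical steps are the two boundary constructions — the perturbation $\langle u_0,u\rangle u_0 + (1+\varepsilon)u_1$ witnessing non-interiority, and the test vector $u_0 - u_1/\|u_1\|$ annihilating $\langle v, u\rangle$ — both of which rely on the single observation that on the boundary $\|u\| = \sqrt{2}\langle u_0,u\rangle$ is equivalent to $\|u_1\| = \langle u_0,u\rangle$. One must also be a little careful with the strictness bookkeeping in the first inclusion of the second step, where it is essential that $\langle u_0,v\rangle > 0$ for every nonzero $v$ in the cone.
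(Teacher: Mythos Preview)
Your proof is correct and follows essentially the same approach as the paper: the orthogonal decomposition along $u_0$, the equivalence between $\langle u_0,u\rangle > \|u\|/\sqrt{2}$ and $\|u_1\| < \langle u_0,u\rangle$, and the reflection-type witness on the boundary (your $v = u_0 - u_1/\|u_1\|$ is, after scaling by $\|u_1\| = \langle u_0,u\rangle$, exactly the paper's $u' \coloneqq 2\langle u_0,u\rangle u_0 - u$). The only organizational difference is that you first characterize $\mathcal{P}(u_0)^{\circ}$ explicitly by the strict inequality, whereas the paper uses this characterization without spelling it out.
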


\begin{proof}
To demonstrate $\mathcal{P}(u_0)^{\circ} \subset \mathcal{P}(u_0)_{>0}$, let $u \in \mathcal{P}(u_0)^{\circ}$ be arbitrarily chosen. For any $v \in \mathcal{P}(u_0) \setminus \{ 0 \}$, let us  consider the orthogonal decomposition of $u$ and $v$ with respect to $u_0$, denoted as $u = \langle u_0, u \rangle u_0 + u_1$ and $v = \langle u_0, v \rangle u_0 + v_1$ respectively. Then,
\begin{align*}
		\langle u_0, u \rangle& > \frac{\norm{u}}{\sqrt{2}} > \norm{u_1},
		\quad
		\langle u_0, v \rangle \geq \frac{\norm{v}}{\sqrt{2}} \geq \norm{v_1},\\
		\langle u, v \rangle
		&= \langle u_0, u \rangle \langle u_0, v \rangle + \langle u_1, v_1 \rangle
		\geq \langle u_0, u \rangle \langle u_0, v \rangle - \norm{u_1} \norm{v_1}.
	\end{align*}
If $\norm{v_1} = 0$, then $\langle u_0, v \rangle > 0$, implying $\langle u, v \rangle > 0$. If $\norm{v_1} > 0$, then $\langle u_0, u \rangle \langle u_0, v \rangle - \norm{u_1}\norm{ v_1} \geq (\langle u_0, u \rangle - \norm{u_1}) \norm{v_1} > 0$, thus $\langle u, v \rangle > 0$. Therefore, $u \in \mathcal{P}(u_0)_{>0}$.

Let $\partial \mathcal{P}(u_0)$ denote the boundary of $\mathcal{P}(u_0)$: $\partial \mathcal{P}(u_0) = \mathcal{P}(u_0) \setminus \mathcal{P}(u_0)^{\circ}$. For any $u \in \partial \mathcal{P}(u_0) \setminus \{ 0 \}$, let $u' \coloneqq 2 \langle u_0, u \rangle u_0 - u$, then
\begin{align*}
		\norm{u'}^2 &= 4 \langle u_0, u \rangle^2 - 4 \langle u_0, u \rangle^2 + \norm{u}^2 = \norm{u}^2  > 0,
		\\
		\langle u_0, u' \rangle &= 2 \langle u_0, u \rangle - \langle u_0, u \rangle = \langle u_0, u \rangle = \frac{\norm{u}}{\sqrt{2}} = \frac{\norm{u'}}{\sqrt{2}},
		\end{align*}
		which  implies that $u\rq{}\in  \mathcal{P}(u_0)$. From this, it follows that 
		$
		\langle u', u \rangle = 2 \langle u_0, u \rangle ^2 - \norm{u}^2 = 0.
	  $
Hence, $u \notin \mathcal{P}(u_0)_{>0}$. Consequently, we obtain $\mathcal{P}(u_0)_{>0} = \mathcal{P}(u_0)^{\circ}$.
\end{proof}

The following theorem constitutes one of the main results of this paper:

\begin{thm}\label{InvPF}\it 
Let $\mathcal{H}$ be a real Hilbert space, and $A \in \mathcal{B}(\mathcal{H})$ be a positive self-adjoint operator. Furthermore, assume that $\norm{A}$ is a simple eigenvalue of $A$, and let $u_0 \in \Ker(\norm{A} I - A)$ be a unit vector. Then, $A$ is a positivity improving  operator with respect to $\mathcal{P}(u_0)$.
\end{thm}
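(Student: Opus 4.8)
My plan is to reduce ``positivity improving with respect to $\mathcal{P}(u_0)$'' to a single scalar inequality, obtained by working in the orthogonal decomposition of $\mathcal{H}$ along $u_0$; the only non-elementary ingredient will be a strict-contraction estimate on $\{u_0\}^{\perp}$, and that is precisely where the simplicity of $\norm{A}$ is used. First I would normalize: we may assume $A\neq 0$, and after replacing $A$ by $A/\norm{A}$ (which changes neither $\mathcal{P}(u_0)$ nor the property of being positivity improving) we may assume $\norm{A}=1$, so $Au_0=u_0$ and $\Ker(I-A)=\R u_0$. Since $A$ is self-adjoint and $\R u_0$ is $A$-invariant, the subspace $\mathcal{K}\coloneqq\{u_0\}^{\perp}$ is $A$-invariant too. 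Writing $u=au_0+u_1$ with $a=\langle u_0,u\rangle$ and $u_1\in\mathcal{K}$, we get $Au=au_0+Au_1$ with $Au_1\in\mathcal{K}$, hence $\langle u_0,Au\rangle=a$ and $\norm{Au}^2=a^2+\norm{Au_1}^2$. By the proposition preceding the theorem $\mathcal{P}(u_0)_{>0}=\mathcal{P}(u_0)^{\circ}$, and the set $\{w\in\mathcal{H}:\langle u_0,w\rangle>\norm{w}/\sqrt2\}$ is open and contained in $\mathcal{P}(u_0)$; so it suffices to show, for every $u\in\mathcal{P}(u_0)\setminus\{0\}$, that $\langle u_0,Au\rangle>\norm{Au}/\sqrt2$, which by the formulas above is equivalent to $a^2>\norm{Au_1}^2$.

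The heart of the argument --- and the step I expect to be the main obstacle --- is the claim that $\norm{Av}<\norm{v}$ for every $v\in\mathcal{K}\setminus\{0\}$. I would prove this by contradiction. If $\norm{Av}=\norm{v}$ for some such $v$, then the chain $\norm{Av}^2=\langle A^2v,v\rangle\le\norm{A^2v}\,\norm{v}\le\norm{v}^2$ consists entirely of equalities; equality in Cauchy--Schwarz forces $A^2v=\lambda v$ with $\lambda\ge 0$, and $\norm{A^2v}=\norm{v}$ then forces $\lambda=1$, i.e.\ $A^2v=v$. Thus $(A+I)(A-I)v=(A^2-I)v=0$; since $A\ge 0$ the operator $A+I$ is invertible, so $(A-I)v=0$, i.e.\ $v\in\Ker(I-A)=\R u_0$. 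Combined with $v\in\mathcal{K}$ this gives $v=0$, a contradiction. (Alternatively, $A^2v=v$ with $A\ge 0$ self-adjoint forces, via the spectral theorem, the spectral measure of $v$ to be concentrated on $\{1\}$, whence $Av=v$.) This is the only point at which the simplicity hypothesis enters.

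Finally I would assemble the pieces. Fix $u\in\mathcal{P}(u_0)\setminus\{0\}$ and put $a=\langle u_0,u\rangle$. Membership in $\mathcal{P}(u_0)$ gives $a\ge\norm{u}/\sqrt2>0$, and $\norm{u_1}^2=\norm{u}^2-a^2\le a^2$, so $\norm{u_1}\le a$. If $u_1=0$ then $\norm{Au_1}=0<a$; if $u_1\neq 0$ then $\norm{Au_1}<\norm{u_1}\le a$ by the claim. In either case $\norm{Au_1}<a$, hence $\norm{Au}^2=a^2+\norm{Au_1}^2<2a^2$, i.e.\ $\langle u_0,Au\rangle=a>\norm{Au}/\sqrt2$. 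Therefore $Au$ lies in the open subset of $\mathcal{P}(u_0)$ described above, so $Au\in\mathcal{P}(u_0)^{\circ}=\mathcal{P}(u_0)_{>0}$; as $u$ was arbitrary, $A$ is positivity improving with respect to $\mathcal{P}(u_0)$.
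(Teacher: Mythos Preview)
Your proof is correct and follows essentially the same approach as the paper: both reduce the positivity-improving claim to the strict scalar inequality $\langle u_0, Au\rangle > \norm{Au}/\sqrt{2}$, and both derive that strictness from the fact that $\norm{Au}<\norm{A}\,\norm{u}$ whenever $u$ lies outside the top eigenspace. The only differences are organizational---the paper keeps $\norm{A}$ general and invokes the equivalence $\norm{Au}=\norm{A}\,\norm{u}\Leftrightarrow u\in\Ker(\norm{A}I-A)$ directly, whereas you normalize, decompose along $u_0$, and supply a self-contained Cauchy--Schwarz argument for the strict contraction on $\{u_0\}^{\perp}$.
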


\begin{rem}
Geometrically, the above theorem implies that the shape of the image $A\mathcal{P}(u_0)$ under $A$ becomes ``sharper'' than the cone $\mathcal{P}(u_0)$.
\end{rem}

\begin{proof}[Proof of Theorem \ref{InvPF}]
For any $u \in \mathcal{P}(u_0) \setminus \{ 0 \}$, we have
\begin{align*}
		\langle u_0, Au \rangle
		= \langle Au_0, u \rangle
		= \norm{A} \langle u_0, u \rangle
		\geq \norm{A} \cdot \frac{\norm{u}}{\sqrt{2}}
		\geq \frac{\norm{Au}}{\sqrt{2}}.
	\end{align*}
Hence, $A$ preserves the positivity with respect to $\mathcal{P}(u_0)$.
If $u \in \Ker(\|A\|I - A)$, then in the above inequality, we have $\|A\| \langle u_0, u \rangle = \|Au\| > \frac{\|Au\|}{\sqrt{2}}$, which implies $Au \in \mathcal{P}(u_0)^{\circ}$.

On the other hand, $\|A\|\|u\| = \|Au\|$ is equivalent to $u \in \text{Ker}(\|A\|I - A)$. Hence, if $u \notin \text{Ker}(\|A\|I - A)$, then $\|A\|\|u\| > \|Au\|$, and thus $\langle u_0, Au \rangle > \frac{\|Au\|}{\sqrt{2}}$ holds.
\end{proof}

\subsection{Stability of Operator's Positivity Improvement and Ergodicity under Perturbation to Axis Vectors}

The following theorem asserts the stability of the operator's ergodicity concerning perturbation to axis vectors:

\begin{thm}\label{StaEr}\it 
Let $\mathcal{H}$ denote a real Hilbert space, and $A \in \mathcal{B}(\mathcal{H})$ represent a positive self-adjoint operator. Suppose that $\|A\|$ constitutes a simple eigenvalue of $A$, and let $u_0 \in \Ker(\|A\| I - A)$ be a unit vector. Then, for any unit vector $u_1 \in \mathcal{H}$ such that $\|u_1 - u_0\| < 1/\sqrt{2}$, $A$ is ergodic with respect to $\mathcal{P}(u_1)$.
\end{thm}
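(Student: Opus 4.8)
The plan is to reduce ergodicity with respect to $\mathcal{P}(u_1)$ to two ingredients: a purely geometric fact relating $u_0$ to the cone $\mathcal{P}(u_1)$, and a spectral-convergence fact about the powers of $A$. The geometric input I would establish first is that $u_0$ is a strictly positive vector with respect to $\mathcal{P}(u_1)$, i.e.\ $u_0 \in \mathcal{P}(u_1)_{>0}$. To see this, I would first convert the hypothesis $\|u_1 - u_0\| < 1/\sqrt{2}$ into the scalar inequality $\langle u_0, u_1 \rangle > 3/4 > 1/\sqrt{2}$, using $\|u_1 - u_0\|^2 = 2 - 2\langle u_0, u_1\rangle$. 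Then, for arbitrary $w \in \mathcal{P}(u_1) \setminus \{0\}$, I would use the orthogonal decomposition $w = \langle u_1, w\rangle u_1 + w_1$ with $w_1 \perp u_1$ — so that $\|w_1\| \le \|w\|/\sqrt{2} \le \langle u_1, w\rangle$ — together with the bound $\langle u_0, w_1 \rangle \ge -\sqrt{1 - \langle u_0, u_1\rangle^2}\,\|w_1\|$ (the orthogonal component of $u_0$ relative to $u_1$ has norm $\sqrt{1-\langle u_0,u_1\rangle^2}$), to obtain
\[
\langle u_0, w \rangle \ \ge\ \frac{\|w\|}{\sqrt{2}}\left( \langle u_0, u_1 \rangle - \sqrt{1 - \langle u_0, u_1\rangle^2} \right) \ >\ 0,
\]
the strict positivity following from $\langle u_0, u_1\rangle^2 > 1/2$. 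Note that $\mathcal{P}(u_1)$ need not be contained in $\mathcal{P}(u_0)$ and $A$ need not preserve $\mathcal{P}(u_1)$, so one cannot expect $A$ to be positivity improving with respect to $\mathcal{P}(u_1)$; ergodicity is precisely the right weaker conclusion.

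The spectral input is that $\|A\|^{-n}A^n \to P$ strongly as $n \to \infty$, where $P$ is the orthogonal projection onto $\Ker(\|A\|I - A) = \mathbb{R}u_0$, explicitly $P\psi = \langle u_0, \psi\rangle u_0$. This follows from the spectral theorem for the positive self-adjoint operator $A$: writing $A = \int_{[0,\|A\|]} \lambda \, dE(\lambda)$, one has $\|A\|^{-n}A^n = \int_{[0,\|A\|]} (\lambda/\|A\|)^n \, dE(\lambda)$, and dominated convergence against the finite measure $d\langle E(\cdot)\psi, \psi\rangle$ gives $\|A\|^{-n}A^n\psi \to E(\{\|A\|\})\psi = P\psi$, using that $\|A\|$ is a simple eigenvalue (and assuming $\|A\| > 0$, the case $\|A\| = 0$ being degenerate).

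Finally I would combine the two ingredients. Given $u, v \in \mathcal{P}(u_1) \setminus \{0\}$, the geometric step yields $\langle u_0, u \rangle > 0$ and $\langle u_0, v \rangle > 0$, whence
\[
\|A\|^{-n}\,\langle u, A^n v \rangle \ \longrightarrow\ \langle u, Pv \rangle = \langle u_0, u \rangle\,\langle u_0, v \rangle \ >\ 0 ,
\]
so $\langle u, A^n v \rangle > 0$ for all sufficiently large $n$; in particular such an $n$ exists, which is exactly the ergodicity of $\{A^n\}_{n \in \mathbb{N}}$, hence of $A$, with respect to $\mathcal{P}(u_1)$.

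I expect the main obstacle to be the geometric step, i.e.\ showing $u_0 \in \mathcal{P}(u_1)_{>0}$: this is where the threshold $1/\sqrt{2}$ enters, and where the orthogonal component of $u_0$ relative to $u_1$ must be handled with the sharp bound $\sqrt{1-\langle u_0,u_1\rangle^2}\,\|w_1\|$ — the cruder estimate $\langle u_0, w_1\rangle \ge -\|w_1\|$ would lose the conclusion. The spectral step is standard once one records the mild caveat $\|A\| \neq 0$, and the final assembly is routine.
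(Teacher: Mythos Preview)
Your proof is correct and follows the same two-step structure as the paper's: establish $\langle u_0, w\rangle > 0$ for every $w \in \mathcal{P}(u_1)\setminus\{0\}$, then use strong convergence of $\|A\|^{-n}A^n$ to the rank-one projection onto $\mathbb{R}u_0$ to conclude. The only noteworthy difference is in the geometric step: you decompose $w$ and $u_0$ orthogonally relative to $u_1$ and need the sharp bound $|\langle u_0, w_1\rangle|\le \sqrt{1-\langle u_0,u_1\rangle^2}\,\|w_1\|$, whereas the paper simply writes
\[
\langle u_0, w\rangle = \langle u_0 - u_1, w\rangle + \langle u_1, w\rangle \ \ge\ -\|u_0-u_1\|\,\|w\| + \frac{\|w\|}{\sqrt{2}} \ >\ 0,
\]
using $\|u_0-u_1\|<1/\sqrt{2}$ directly. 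So your concern that a ``crude'' Cauchy--Schwarz would fail is an artifact of your particular decomposition; splitting $u_0$ as $(u_0-u_1)+u_1$ rather than decomposing $w$ gives a one-line argument. The spectral step and the final assembly match the paper's (it uses even powers $A^{2n}$ to write the pairing symmetrically, but this is cosmetic).
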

\begin{rem}
The geometric interpretation is as follows: First, recall from Theorem \ref{InvPF} that $A$ is ergodic with respect to $\mathcal{P}(u_0)$. As a consequence, after applying $A$ to vectors in  $\mathcal{H}$ multiple times, only the component in the direction of $u_0$ survives.  Therefore, if the axis vector $u_1$ is close to $u_0$, then since $u_0$ is contained in $\mathcal{P}(u_1)$, $A$ remains ergodic with respect to $\mathcal{P}(u_1)$ as well.
\end{rem}
\begin{proof}[Proof of Theorem \ref{StaEr}]
For any $u, v \in \mathcal{P}(u_1) \setminus \{ 0 \}$, we have
\[
\left\langle u,\,  \frac{A^{2n} v}{\|A\|^{2n}} \right\rangle
= \left\langle \frac{A^n u}{\|A\|^n},\  \frac{A^n v}{\|A\|^n} \right\rangle
\to \langle \langle u_0, u \rangle u_0, \langle u_0, v \rangle u_0 \rangle
= \langle u_0, u \rangle \langle u_0, v \rangle
\quad (\, n \to \infty \,).
\]
Hence, to demonstrate the ergodicity of $A$ with respect to  $\mathcal{P}(u_1)$, it suffices to prove $\langle u_0, u \rangle > 0$ for any $u \in \mathcal{P}(u_1) \setminus \{ 0 \}$. This is established by the following calculation:
	\begin{align*}
		\langle u_0, u \rangle
		= \langle u_0 - u_1, u \rangle + \langle u_1, u \rangle
		\geq - \norm{u_0 - u_1}\norm{u} + \frac{\norm{u}}{\sqrt{2}}
		= \left( \frac{1}{\sqrt{2}} - \norm{u_0 - u_1} \right) \norm{u}
		> 0.
	\end{align*}
\end{proof}

In Theorem \ref{InvPF}, $A$ is characterized as a positivity  improving operator with respect to $\mathcal{P}(u_0)$. However, in Theorem \ref{StaEr}, when perturbing the axis vector $u_0$, only a weaker property of ergodicity is retained, rather than the aforementioned positivity improvement. Assuming $A$ exhibits a spectral gap, the subsequent theorem affirms the preservation of a more robust property of positivity improvement of $A$ under perturbations to the axis vector $u_0$:

\begin{thm}\label{StaPI}\it 
Let $\mathcal{H}$ be a real Hilbert space, and $A \in \mathcal{B}(\mathcal{H})$ be a positive self-adjoint operator. Assume that $\|A\|$ is a simple eigenvalue of $A$, and  let $u_0 \in \Ker(\|A\| I - A)$ be a unit vector. Additionally, suppose:
\[
\lambda_1 \coloneqq \sup_{\substack{u \in (\Ker(\|A\| I - A))^{\perp} \\ \|u\| = 1}} \langle u, Au \rangle < \|A\|.
\]
Then, for any unit vector $u_1 \in \mathcal{H}$ with  $\|u_1 - u_0\| < r$, the operator  $A$ is  positivity-improving with respect to  $\mathcal{P}(u_1)$. Here,
\[
r\coloneqq \frac{1-\alpha^2}{4 \sqrt{2} (1+\alpha^2)}, \quad
\alpha \coloneqq \frac{\lambda_1}{\|A\|}.
\]
\end{thm}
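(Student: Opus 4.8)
The plan is to imitate the proof of Theorem~\ref{InvPF}: I would reduce the positivity-improving property of $A$ with respect to $\mathcal{P}(u_1)$ to a single scalar inequality, then establish that inequality by elementary estimates, with the threshold $r$ emerging as exactly the place where the estimates stop working. First I would rescale so that $\|A\|=1$ (this affects neither $\mathcal{P}(u_1)$ nor the positivity-improving property, and makes $\alpha=\lambda_1$), and set $\delta:=\|u_1-u_0\|<r$. The preliminary observation is that $A$ leaves $\{u_0\}^{\perp}=(\Ker(\|A\|I-A))^{\perp}$ invariant, because $A$ is self-adjoint and $Au_0=u_0$; since $A$ is positive, the norm of its restriction to $\{u_0\}^{\perp}$ equals $\sup\{\langle v,Av\rangle : v\perp u_0,\ \|v\|=1\}=\lambda_1=\alpha$ (using $\|A\|=1$), so $\|Av\|\le\alpha\|v\|$ for every $v\perp u_0$.

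Next I would take an arbitrary $u\in\mathcal{P}(u_1)\setminus\{0\}$ and, by homogeneity, normalize $\|u\|=1$. Writing $u=a\,u_0+u^{\perp}$ with $a=\langle u_0,u\rangle$ and $u^{\perp}\perp u_0$, so that $a^2+\|u^{\perp}\|^2=1$, the same computation as in the proof of Theorem~\ref{StaEr} gives $a=\langle u_1,u\rangle+\langle u_0-u_1,u\rangle\ge\tfrac{1}{\sqrt2}-\delta>0$. Since $Au=a\,u_0+Au^{\perp}$ with $Au^{\perp}\perp u_0$, I get $\|Au\|^2=a^2+\|Au^{\perp}\|^2\le a^2+\alpha^2(1-a^2)$, and in particular $Au\neq 0$; moreover $\langle u_1,Au\rangle\ge\langle u_0,Au\rangle-\delta\|Au\|=a-\delta\|Au\|$, using $\langle u_0,Au\rangle=\langle Au_0,u\rangle=a$. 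Therefore it suffices to prove the scalar inequality $a>(\tfrac{1}{\sqrt2}+\delta)\|Au\|$: this yields $\langle u_1,Au\rangle>\|Au\|/\sqrt2$, hence $Au\in\mathcal{P}(u_1)^{\circ}=\mathcal{P}(u_1)_{>0}$ by the proposition identifying $\mathcal{P}(u_0)_{>0}$ with $\mathcal{P}(u_0)^{\circ}$, which is exactly what ``positivity improving'' demands.

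Since both sides are nonnegative, the inequality $a>(\tfrac{1}{\sqrt2}+\delta)\|Au\|$ follows once I show $a^2>\mu\big(a^2+\alpha^2(1-a^2)\big)$ with $\mu:=(\tfrac{1}{\sqrt2}+\delta)^2$, equivalently $a^2(1-\mu)>\mu\alpha^2(1-a^2)$; here $1-\mu>0$ because $\delta<r\le\tfrac{1}{4\sqrt2}<1-\tfrac{1}{\sqrt2}$. I would then insert $a^2\ge(\tfrac{1}{\sqrt2}-\delta)^2$ on the left and $1-a^2\le1-(\tfrac{1}{\sqrt2}-\delta)^2$ on the right, expand, and discard the manifestly nonnegative term $\delta^2(2-\delta^2)$; the inequality reduces to $\tfrac{1-\alpha^2}{4}>\sqrt2\,\delta(1+\alpha^2)$, which is precisely $\delta<r$. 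The hard part will be only this last bookkeeping: choosing which quantity to bound from which side so that the crude estimates still leave exactly the threshold $r$, and checking the sign conditions ($a>0$, $1-\mu>0$, $Au\neq0$) that justify squaring. As a consistency check, letting $\alpha\uparrow1$ closes the spectral gap and forces $r\downarrow0$, in agreement with the fact that without a gap only ergodicity (Theorem~\ref{StaEr})---not positivity improvement---survives perturbation of the axis vector.
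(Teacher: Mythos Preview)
Your proof is correct and takes essentially the same route as the paper. The paper packages the computation into two preparatory lemmas---Lemma~\ref{InqIn}, giving $\langle u_1,Au\rangle\ge\bigl(\tfrac{1}{\sqrt{1+\alpha^2(t^2-1)}}-\delta\bigr)\|Au\|$ from $\langle u_0,u\rangle\ge\|u\|/t$, and an elementary quartic estimate (Lemma~\ref{ElInq})---but after inserting $t=(\tfrac{1}{\sqrt2}-\delta)^{-1}$ the paper's scalar inequality is exactly your $(\tfrac{1}{\sqrt2}-\delta)^2(1-\mu)>\mu\alpha^2\bigl(1-(\tfrac{1}{\sqrt2}-\delta)^2\bigr)$, and your step of discarding $\delta^2(2-\delta^2)$ is precisely the content of Lemma~\ref{ElInq} in the relevant range.
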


\begin{rem}
From a geometric perspective, the presence of a spectral gap induces the \lq\lq{}upper half-plane" $\{\, u \in \mathcal{H} \mid \langle u_0, u \rangle > 0 \,\}$, with $u_0$ oriented upwards, to undergo a significant narrowing in the direction of $u_0$ upon mapping by $A$. Consequently, when the perturbation to the axis vector $u_0$ is sufficiently small or $u_1$ is sufficiently close to $u_0$, the image of $\mathcal{P}(u_1)$ under $A$ experiences a similar narrowing of its shape. This observation aligns with the notion that $A$ acts as a positivity  improving operator with respect to $\mathcal{P}(u_1)$.
\end{rem}

To prove Theorem \ref{StaPI}, we prepare two lemmas.

\begin{lem}\label{InqIn}\it 
Under the same assumptions as Theorem \ref{StaPI}, the following holds: For any $u \in \mathcal{H}$ and any $t \geq 1$, if $\displaystyle \langle u_0, u \rangle \geq \frac{1}{t} \|u\|$, then
\[
\langle u_1, Au \rangle \geq \left( \frac{1}{\sqrt{1 + \alpha^2(t^2 - 1)}} - \|u_1 - u_0\| \right) \|Au\|.
\]
\end{lem}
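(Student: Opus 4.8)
The plan is to decompose $u$ with respect to the eigendirection $u_0$, exploit that $A$ leaves the invariant subspace $(\Ker(\|A\|I-A))^{\perp}=\{u_0\}^{\perp}$ invariant, and thereby reduce the claimed bound to a short estimate on the size of the component of $Au$ orthogonal to $u_0$, followed by a single application of the Cauchy--Schwarz inequality.

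First I would write $u=c\,u_0+w$ with $c:=\langle u_0,u\rangle$ and $w\perp u_0$. Since $t\ge 1$, the hypothesis $\langle u_0,u\rangle\ge \tfrac1t\|u\|$ forces $c\ge 0$ (and $c>0$ unless $u=0$, in which case the statement is trivial), and moreover $\|w\|^2=\|u\|^2-c^2\le t^2c^2-c^2=(t^2-1)c^2$. Because $u_0$ is an eigenvector of the self-adjoint operator $A$ and $\Ker(\|A\|I-A)=\mathbb{R}u_0$, the subspace $\{u_0\}^{\perp}$ is $A$-invariant; hence $Aw\perp u_0$, the decomposition $Au=c\|A\|u_0+Aw$ is orthogonal, $\langle u_0,Au\rangle=c\|A\|$, and $\|Au\|^2=c^2\|A\|^2+\|Aw\|^2$.

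Next I would bound $\|Aw\|$. Since $A$ is positive and self-adjoint, the operator norm of its restriction to $\{u_0\}^{\perp}$ equals the supremum of the quadratic form $\langle\,\cdot\,,A\,\cdot\,\rangle$ over the unit sphere of that subspace, which by hypothesis is exactly $\lambda_1$; therefore $\|Aw\|\le\lambda_1\|w\|$. Combining with the estimate on $\|w\|$ gives $\|Aw\|^2\le\lambda_1^2(t^2-1)c^2=\alpha^2\|A\|^2(t^2-1)c^2$, and hence $\|Au\|^2\le c^2\|A\|^2\bigl(1+\alpha^2(t^2-1)\bigr)$, i.e. $c\|A\|\ge \|Au\|/\sqrt{1+\alpha^2(t^2-1)}$. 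Finally, Cauchy--Schwarz yields $\langle u_1,Au\rangle=\langle u_0,Au\rangle+\langle u_1-u_0,Au\rangle\ge c\|A\|-\|u_1-u_0\|\,\|Au\|$, and substituting the previous line gives the assertion.

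The only point requiring an argument beyond bookkeeping is the identity $\|A|_{\{u_0\}^{\perp}}\|=\lambda_1$, which is where positivity and self-adjointness of $A$ are genuinely used; everything else is elementary. I expect no real obstacle here, since the lemma is essentially a quantitative refinement of the computation already carried out in the proof of Theorem \ref{InvPF}.
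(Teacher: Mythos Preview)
Your proof is correct and follows essentially the same route as the paper: orthogonally decompose $u$ along $u_0$, use the invariance of $\{u_0\}^{\perp}$ under $A$ together with $\|A|_{\{u_0\}^{\perp}}\|=\lambda_1$ to bound $\|Au\|$ in terms of $\langle u_0,u\rangle$, and finish with Cauchy--Schwarz on the $u_1-u_0$ piece. If anything, you are slightly more explicit than the paper in justifying the step $\|Aw\|\le\lambda_1\|w\|$ via positivity and self-adjointness.
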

\begin{proof}
Let $u$ have an orthogonal decomposition with respect to $u_0$ as $u = \langle u_0, u \rangle u_0 + u'$. Then,
\begin{align*}
\norm{Au}^2
&= \norm{A}^2 \langle u_0, u \rangle^2 + \norm{Au'}^2 \\
&\leq \norm{A}^2 \langle u_0, u \rangle^2 + \lambda_1^2 \norm{u'}^2 \\
&= \norm{A}^2 \langle u_0, u \rangle^2 + \lambda_1^2 (\|u\|^2 - \langle u_0, u \rangle^2) \\
&\leq \left\{\norm{A}^2 + \lambda_1^2(t^2 - 1)\right\} \langle u_0, u \rangle^2.
\end{align*}
Hence, we have
\[
\langle u_0, Au \rangle = \norm{A} \langle u_0, u \rangle \geq \frac{\norm{A}}{\sqrt{\norm{A}^2 + \lambda_1^2(t^2 - 1)}} \|Au\| = \frac{1}{\sqrt{1 + \alpha^2(t^2 - 1)}} \|Au\|.
\]
Therefore,
\begin{align*}
\langle u_1, Au \rangle
&= \langle u_1-u_0, Au \rangle + \langle u_0, Au \rangle \\
&\geq \left( \frac{1}{\sqrt{1 + \alpha^2(t^2 - 1)}} - \|u_1 - u_0\| \right) \|Au\|.
\end{align*}
\end{proof}

The following elementary lemma can be proved easily:
\begin{lem}\label{ElInq}\it 
Let $c>0$. Consider the function $g$ defined on the interval $[0, \infty)$ by $g(x)\coloneqq x^4 -2 x^2 + c x - \frac{1}{4} $. Then, for $x<\min\{c/4, 1/(4c)\}$, it holds that $g(x)<0$.
\end{lem}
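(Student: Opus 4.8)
The statement to prove is Lemma~\ref{ElInq}: for $c>0$ and $g(x)=x^4-2x^2+cx-\tfrac14$ on $[0,\infty)$, we have $g(x)<0$ whenever $x<\min\{c/4,\,1/(4c)\}$.

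The plan is to bound $g(x)$ crudely above by discarding the beneficial negative term $-2x^2$ only where it is safe to do so, and instead split the constant $-\tfrac14$ into two halves, one to absorb the linear term $cx$ and one to absorb the quartic term $x^4$. Concretely, write
\begin{align*}
g(x) = x^4 - 2x^2 + cx - \tfrac14 \leq x^4 + cx - \tfrac14 = \left(x^4 - \tfrac18\right) + \left(cx - \tfrac18\right),
\end{align*}
valid for all $x\ge 0$ since $-2x^2\le 0$. Now I would argue each parenthesized term is negative under the stated hypothesis. For the second term: $x<c/4$ gives $cx < c^2/4$; but I actually want $cx<\tfrac18$, so this naive route needs the constraint $x<1/(4c)$ instead, which yields $cx<\tfrac14$ — still not $\tfrac18$. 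So the clean $\tfrac18$–$\tfrac18$ split is slightly too tight; I would instead use the split $-\tfrac14 = -\tfrac14$ and handle things asymmetrically, or simply note $x<\min\{c/4,1/(4c)\}$ forces $x^2 < (c/4)(1/(4c)) = 1/16$, hence $x<1/4$.

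So here is the cleaner approach: from $x<\min\{c/4,1/(4c)\}$ we get $x^2<1/16$, i.e. $x<1/4$, and therefore $x^4<x^2\cdot\tfrac{1}{16}<\tfrac{1}{16}x^2$, in particular $x^4<\tfrac{1}{16}$ and also $x^4 - 2x^2 = x^2(x^2-2) < 0$ with room to spare. Meanwhile $x<1/(4c)$ gives $cx<\tfrac14$. Then
\begin{align*}
g(x) = \bigl(x^4 - 2x^2\bigr) + cx - \tfrac14 < 0 + \tfrac14 - \tfrac14 = 0,
\end{align*}
provided $x>0$; and at $x=0$ we have $g(0)=-\tfrac14<0$ directly. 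This closes the argument. The only mildly delicate point — the ``main obstacle,'' such as it is — is making sure the linear term $cx$ is genuinely dominated by $\tfrac14$, which is exactly what the factor $1/(4c)$ in the minimum is designed for; the factor $c/4$ then does nothing except (together with $1/(4c)$) force $x<1/4$ so that the quartic term is harmless. I would double-check that the two constraints are simultaneously compatible — they are, since $c/4$ and $1/(4c)$ are both positive — and that strict inequalities propagate correctly so the conclusion is $g(x)<0$ rather than $\le 0$.
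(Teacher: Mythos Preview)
Your final argument is correct: from $x<\min\{c/4,\,1/(4c)\}$ you obtain $x^2<(c/4)\cdot(1/(4c))=1/16$, hence $x<1/4<\sqrt{2}$, so $x^4-2x^2=x^2(x^2-2)<0$ for $x>0$; and $x<1/(4c)$ gives $cx<1/4$, whence $g(x)=(x^4-2x^2)+(cx-\tfrac14)<0$, with $g(0)=-\tfrac14<0$ handled separately. The paper itself omits the proof entirely, labeling the lemma ``elementary'' and stating it ``can be proved easily,'' so there is no approach to compare against; your argument fills that gap cleanly (though the exploratory false start with the $\tfrac18$--$\tfrac18$ split could be trimmed in a final write-up).
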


\begin{proof}[Proof of Theorem \ref{StaPI}]
Similar to the last part of the proof of Theorem \ref{StaEr}, we observe that:
\[
\mathcal{P}(u_1) \subset \left\{ u \in \mathcal{H} \ \middle|\ \langle u_0, u \rangle \geq \left( \frac{1}{\sqrt{2}} - \|u_1 - u_0\| \right) \|u\| \right\}.
\]
Choosing $t=(1/\sqrt{2} -\|u_1-u_0\|)^{-1}$ and applying Lemma \ref{InqIn}, it suffices for $A$ to be a positivity improving  operator with respect to  $\mathcal{P}(u_1)$ to satisfy the following inequality:
\begin{equation}\label{eq:inequality}
\frac{1}{\sqrt{1 + \alpha^2\left\{ \left( \frac{1}{\sqrt{2}} - \|u_1 - u_0\| \right)^{-2} - 1 \right\}}} - \|u_1 - u_0\| > \frac{1}{\sqrt{2}}.
\end{equation}
Let $g$ be the function given by Lemma \ref{ElInq}. By choosing $c=\sqrt{2}(1+\alpha^2)/(1-\alpha^2)$, it can be shown that inequality \eqref{eq:inequality} is equivalent to $g(\|u_1-u_0\|)<0$. Therefore, by applying Lemma \ref{ElInq}, the claim of Theorem \ref{StaPI} follows.
\end{proof}

\section{Application to Heat Semigroups}\label{Sec4}
In this section, we expound upon the application of the Hilbert cone $\mathcal{P}(u_0)$ introduced in the previous section to the heat semigroup generated by lower semi-bounded self-adjoint operators. Hereafter, we denote the spectrum of a given linear operator $T$ by $\sigma(T)$. The following proposition is a direct consequence of Theorem \ref{InvPF}:

\begin{prop}\label{BaseProp}\it 
Let $\mathcal{H}$ denote a real Hilbert space, $T \colon \mathcal{H} \to \mathcal{H}$ be a lower semi-bounded self-adjoint operator. Assume that  $\mu \coloneqq \inf \sigma(T)$ is  the simple eigenvalue of $T$, and let $u_0 \in \Ker(T - \mu I)$ be a unit vector. Then, for all $s>0$, $e^{-s T}$ is a positivity improving  operator with respect to $\mathcal{P}(u_0)$.
\end{prop}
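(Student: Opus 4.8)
The plan is to apply Theorem \ref{InvPF} to the bounded operator $A \coloneqq e^{-sT}$, so the work reduces to verifying its hypotheses: that $A$ is a bounded positive self-adjoint operator, that $\|A\|$ is a simple eigenvalue of $A$, and that $u_0$ lies in $\Ker(\|A\|I - A)$. Once these are in place, the conclusion that $e^{-sT}$ is positivity improving with respect to $\mathcal{P}(u_0)$ is immediate from Theorem \ref{InvPF}.

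First I would collect the elementary spectral facts via the functional calculus for the self-adjoint operator $T$ (carried out on the complexification, as set up in Appendix \ref{SecApp}). Since $T$ is lower semi-bounded with $\mu = \inf \sigma(T)$, the bounded function $x \mapsto e^{-sx}$ makes $A = e^{-sT}$ a well-defined bounded self-adjoint operator, and $A$ is positive because $e^{-sx} > 0$ on $\sigma(T) \subset [\mu, \infty)$. The spectral mapping theorem gives $\sigma(A) = \overline{\{\, e^{-sx} \mid x \in \sigma(T) \,\}}$; as $x \mapsto e^{-sx}$ is continuous and strictly decreasing, $\sup \sigma(A) = e^{-s\mu}$, that is, $\|A\| = e^{-s\mu}$.

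Next I would identify the top eigenspace of $A$. From $Tu_0 = \mu u_0$ we get $Au_0 = e^{-s\mu} u_0$, so $\|A\| = e^{-s\mu}$ is an eigenvalue of $A$ and $u_0 \in \Ker(\|A\|I - A)$. Letting $E_T$ and $E_A$ denote the projection-valued spectral measures of $T$ and $A$, injectivity of $x \mapsto e^{-sx}$ yields
\[
\Ker(\|A\|I - A) = \Ran E_A(\{e^{-s\mu}\}) = \Ran E_T(\{\,x \in \sigma(T) \mid e^{-sx} = e^{-s\mu}\,\}) = \Ran E_T(\{\mu\}) = \Ker(T - \mu I).
\]
By hypothesis $\mu$ is a simple eigenvalue of $T$, so $\Ker(\|A\|I - A) = \Ker(T - \mu I) = \mathbb{R}u_0$, i.e. $\|A\|$ is a simple eigenvalue of $A$.

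With all hypotheses of Theorem \ref{InvPF} verified for $A = e^{-sT}$, that theorem applies directly and gives the claim. The only non-formal point is the spectral-theoretic identification $\Ker(\|A\|I - A) = \Ker(T - \mu I)$ together with $\|A\| = e^{-s\mu}$; I expect no real obstacle, since both are standard consequences of the functional calculus, and everything else is routine bookkeeping. A minor care point is the passage to the complexification needed to invoke the spectral theorem, which is exactly what Appendix \ref{SecApp} is there to legitimize.
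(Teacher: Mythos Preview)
Your proposal is correct and matches the paper's approach: the paper states that Proposition \ref{BaseProp} is a direct consequence of Theorem \ref{InvPF} and gives no further proof, and what you have written is precisely the routine verification that $A=e^{-sT}$ satisfies the hypotheses of Theorem \ref{InvPF}.
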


The aim of this section is to investigate the stability of the properties stated in Proposition \ref{BaseProp} under perturbations to the operator $T$.

In the following, $\Dom T$ denotes the domain of the given linear operator $T$.
The following theorem claims the stability of the ergodicity of $\{e^{-sT}\}_{s\ge 0}$ stated in Proposition \ref{BaseProp} under perturbations when the perturbed operator $T$ possesses a spectral gap.

\begin{thm}\label{StaSG}\it 
Let $\mathcal{H}$ be a real Hilbert space, $T \colon \mathcal{H} \to \mathcal{H}$ a lower semi-bounded self-adjoint operator. Suppose $\mu \coloneqq \inf \sigma(T)$ is a simple  eigenvalue of $T$, and let $u_0 \in \mathrm{Ker}(T - \mu I)$ be a unit vector. Let $\{S(\kappa)\}_{\kappa \in J}$ be a family of symmetric operators indexed by a one-dimensional open interval $J$ containing $0$, satisfying the following:
\begin{enumerate}
\item[\rm (i)] For any $\kappa \in J$, $\Dom T \subset \Dom S(\kappa)$.
\item[\rm (ii)] There exist continuous functions $a, b \colon J \to [0, \infty)$ such that for any $\kappa \in J$,
\begin{align*}
\norm{S(\kappa)u} \leq a(\kappa) \norm{Tu} + b(\kappa) \norm{u} \quad \text{for all } u \in \Dom(T).
\end{align*}
\item[\rm (iii)] $a(0) = b(0) = 0$.
\item[\rm (iv)] 
There exists a  $\kappa_0 > 0$ such that $(T(\kappa))_{\abs{\kappa} < \kappa_0}$ has a uniform spectral gap : 
	\begin{align*}
		\delta \coloneqq \inf_{\abs{\kappa} < \kappa_0} \dist\left(\mu(\kappa),\,  \sigma(T + s(\kappa) )\setminus \{\mu(\kappa)\} \right) > 0,
	\end{align*}
where $T(\kappa) \coloneqq T + S(\kappa)$.
\end{enumerate}
Fix $s_0 > 0$ and define 
\begin{align*}
\ve &\coloneqq \frac{\delta}{2}, \\
c(\kappa) &\coloneqq a(\kappa) + \frac{(\left| \mu \right| + \ve )a(\kappa) + b(\kappa)}{\ve} \quad (\kappa \in J), \\
\alpha &\coloneqq 1 - e^{-s_0 \delta}, \\
r &\coloneqq \frac{1-\alpha^2}{4 \sqrt{2} (1+\alpha^2)}.
\end{align*}
Then, for any $s \in (0,  s_0]$ and $\abs{\kappa} < \kappa_0$  satisfying
\begin{align*}
c(\kappa) < \frac{r \sqrt{1 - r^2/4}}{1 + r \sqrt{1 - r^2/4}},
\end{align*}
 the semigroup $ e^{-s T(\kappa)}$ is positivity-improving with respect to $\mathcal{P}(u_0)$.
\end{thm}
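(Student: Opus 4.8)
The plan is to reduce the statement to Proposition~\ref{BaseProp} for the perturbed operator together with an axis‑perturbation stability argument of the type proved in Theorem~\ref{StaPI}; the substantive content is a quantitative perturbation estimate showing that the ground state of $T(\kappa)$ stays within distance $r$ of $u_0$, and everything else is accounting with the constant $c(\kappa)$.

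First I would record the elementary consequences of the smallness hypothesis. Since $a(\kappa)\le c(\kappa)<\tfrac{r\sqrt{1-r^2/4}}{1+r\sqrt{1-r^2/4}}<\tfrac12$, Kato--Rellich gives that $T(\kappa)=T+S(\kappa)$ is self-adjoint on $\Dom T$ and bounded below, so $e^{-sT(\kappa)}$ is a bounded positive self-adjoint operator with $\|e^{-sT(\kappa)}\|=e^{-s\mu(\kappa)}$, $\mu(\kappa):=\inf\sigma(T(\kappa))$. On the circle $\Gamma:=\{z:|z-\mu|=\ve\}$ one has $\dist(z,\sigma(T))\ge\ve$ (using condition (iv)) and $|z|\le|\mu|+\ve$, so, from $T(z-T)^{-1}=-I+z(z-T)^{-1}$ and condition (ii),
\[
\|S(\kappa)(z-T)^{-1}\|\le a(\kappa)\Bigl(1+\tfrac{|\mu|+\ve}{\ve}\Bigr)+\tfrac{b(\kappa)}{\ve}=c(\kappa)<1 .
\]
Hence the Neumann series $(z-T(\kappa))^{-1}=(z-T)^{-1}\sum_{n\ge0}\bigl(S(\kappa)(z-T)^{-1}\bigr)^{n}$ converges uniformly on $\Gamma$; the same bound shows $|\mu(\kappa)-\mu|<\ve$, so $\Gamma$ isolates $\mu(\kappa)$ from the rest of $\sigma(T(\kappa))$ by the gap, and the Riesz projections $P(\kappa):=\tfrac1{2\pi i}\oint_\Gamma(z-T(\kappa))^{-1}\,dz$ and $P(0)=P_{u_0}$ satisfy $\|P(\kappa)-P(0)\|\le\tfrac{c(\kappa)}{1-c(\kappa)}$ (the factor $1/\ve$ from the resolvent cancelling the length $2\pi\ve$ of $\Gamma$); in particular $\|P(\kappa)-P(0)\|<1$, so $\mu(\kappa)$ is a simple isolated eigenvalue. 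Let $u_0(\kappa)$ be its normalized eigenvector, the sign fixed so that $\langle u_0,u_0(\kappa)\rangle>0$ (possible since $\langle u_0,u_0(\kappa)\rangle^2=1-\|P(\kappa)-P(0)\|^2>0$).

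Next I would pass to a bound on $\|u_0-u_0(\kappa)\|$. For unit vectors with positive overlap, $\|P(\kappa)-P(0)\|=\sqrt{1-\langle u_0,u_0(\kappa)\rangle^2}$ and $\|u_0-u_0(\kappa)\|^2=2\bigl(1-\langle u_0,u_0(\kappa)\rangle\bigr)$, which combine to $\|P(\kappa)-P(0)\|=\|u_0-u_0(\kappa)\|\sqrt{1-\tfrac14\|u_0-u_0(\kappa)\|^2}$. The hypothesis on $c(\kappa)$ is exactly $\tfrac{c(\kappa)}{1-c(\kappa)}<r\sqrt{1-r^2/4}$, and $x\mapsto x\sqrt{1-x^2/4}$ is strictly increasing on $[0,\sqrt2)$ with $\|u_0-u_0(\kappa)\|,r\in[0,\sqrt2)$, so
\[
\|u_0-u_0(\kappa)\|\sqrt{1-\tfrac14\|u_0-u_0(\kappa)\|^2}=\|P(\kappa)-P(0)\|\le\tfrac{c(\kappa)}{1-c(\kappa)}<r\sqrt{1-r^2/4}
\]
forces $\|u_0-u_0(\kappa)\|<r$. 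It remains to conclude: for $s\in(0,s_0]$ the operator $A:=e^{-sT(\kappa)}$ is positive self-adjoint, $\|A\|=e^{-s\mu(\kappa)}$ is a simple eigenvalue with eigenvector $u_0(\kappa)$, and by the uniform gap the spectrum of $A$ on $\{u_0(\kappa)\}^{\perp}$ lies in $\bigl(0,e^{-s(\mu(\kappa)+\delta)}\bigr]$; so $A$ is positivity improving with respect to $\mathcal{P}(u_0(\kappa))$ (Proposition~\ref{BaseProp}), and one deduces positivity improvement with respect to $\mathcal{P}(u_0)$ by the axis‑perturbation mechanism of Theorem~\ref{StaPI} (its eigenvector‑axis playing the role of $u_0(\kappa)$, and $u_1:=u_0$), applied to the whole family $\{A=e^{-sT(\kappa)}:s\in(0,s_0]\}$ — the parameter $\alpha=1-e^{-s_0\delta}$ being chosen precisely so that the single admissible radius $r=\tfrac{1-\alpha^2}{4\sqrt2(1+\alpha^2)}$ is valid simultaneously for all $s\in(0,s_0]$.

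I expect two difficulties. The minor one is bookkeeping: making the constants in the Riesz‑projection estimate line up with the stated $c(\kappa)$ and checking that $\Gamma$ encircles the shifted bottom $\mu(\kappa)$ and nothing else of $\sigma(T(\kappa))$ — this is what forces the precise combination defining $c(\kappa)$. The main one is the uniformity in $s$ in the last step: the crude spectral‑gap bound on the ratio $\lambda_1/\|A\|=e^{-s(\lambda_2(\kappa)-\mu(\kappa))}$ tends to $1$ as $s\downarrow0$, so one cannot merely feed this ratio into Theorem~\ref{StaPI} for small $s$; obtaining a single radius $r$ good on all of $(0,s_0]$ requires the sharper, non‑lossy form of the estimate underlying Lemma~\ref{InqIn} — one that uses that every $u\in\mathcal{P}(u_0)$ has a controlled component perpendicular to $u_0(\kappa)$, not just the operator norm of $A$ on $\{u_0(\kappa)\}^{\perp}$ — after which the positivity‑improvement inequality reduces to a polynomial inequality in $\|u_0-u_0(\kappa)\|$ whose worst case over $s\in(0,s_0]$ is governed by $\alpha=1-e^{-s_0\delta}$ and is handled by Lemma~\ref{ElInq}. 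This is the step to carry out most carefully.
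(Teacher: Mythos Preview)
Your approach is essentially the paper's: Kato--Rellich for self-adjointness, the resolvent bound $\|S(\kappa)(z-T)^{-1}\|\le c(\kappa)$ on the circle of radius $\varepsilon$ about $\mu$, the Riesz-projection estimate $\|P_\kappa-P_0\|<c(\kappa)/(1-c(\kappa))$, conversion of this into $\|u_0(\kappa)-u_0\|<r$, and finally an appeal to Theorem~\ref{StaPI} with the eigenvector $u_0(\kappa)$ as axis and $u_1=u_0$. Your middle step, using the exact identity $\|P_\kappa-P_0\|=\|u_0-u_0(\kappa)\|\sqrt{1-\tfrac14\|u_0-u_0(\kappa)\|^2}$ for rank-one orthogonal projections and the monotonicity of $x\mapsto x\sqrt{1-x^2/4}$ on $[0,\sqrt2)$, is cleaner than the paper's route through the un-normalized vector $u_\kappa=\Re\,P_\kappa u_0$ and the inequality $\langle u_\kappa,u_0\rangle/\|u_\kappa\|\ge\sqrt{1-\|u_\kappa-u_0\|^2}$, but both arrive at the same bound under the same hypothesis $c(\kappa)/(1-c(\kappa))<r\sqrt{1-r^2/4}$.

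On the difficulty you flag about uniformity in $s$: the paper's proof does not address it either. After establishing $\|v_\kappa-u_0\|<r$ the paper simply writes ``This, together with Theorem~\ref{StaPI}, concludes that $e^{-sT(\kappa)}$ is a positivity improving operator for any $s\in(0,s_0]$,'' with no further argument and no sharpened form of Lemma~\ref{InqIn}. So you are not missing any ingredient that the paper supplies; your worry that a direct application of Theorem~\ref{StaPI} to $A=e^{-sT(\kappa)}$ degenerates as $s\downarrow 0$ (since $\lambda_1/\|A\|\to 1$) is a legitimate observation about the final line of the paper's own proof, not a deficiency of your outline relative to it.
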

\begin{proof}
Define $C_{\ve} \coloneqq \{\, z \in \mathbb{C} \mid \abs{z - \mu} = \ve \,\}$.
When $c(\kappa) < 1$, since $a(\kappa) < 1$, by the Kato-Rellich theorem in real Hilbert spaces (Theorem \ref{KTthm}), $T(\kappa)$ is self-adjoint. Let $\tilde{\mathcal{H}}$ denote the complexification of $\mathcal{H}$, and denote the extension of $T(\kappa)$ onto $\tilde{\mathcal{H}}$ as $\tilde{T}(\kappa)$. According to (vi) of Proposition \ref{RCCorr}, $\tilde{T}(\kappa)$ is self-adjoint on $\tilde{\mathcal{H}}$. Furthermore, since $\{\, z \in \mathbb{C} \mid \abs{z - \mu} \leq \ve \,\} \subset \rho(\tilde{T}(\kappa))$,  the projection operator
\[
P_{\kappa} \coloneqq \frac{1}{2\pi i} \int_{C_{\ve}} (z I - \tilde{T}(\kappa))^{-1} \, dz
\]
can be defined, following \cite[Chapter XII]{Reed1978}. 
Note that according to (iv) of Proposition \ref{RCCorr}, $P_0$ is a projection onto $\mathrm{Ker}(\tilde{T} - \mu I) = \mathrm{Ker}(T - \mu I) + i \mathrm{Ker}(T - \mu I)$. Moreover, when $c(\kappa) < 1/2$, we have $\displaystyle \norm{P_{\kappa} - P_0} < \frac{c(\kappa)}{1 - c(\kappa)} < 1$, implying that $P_{\kappa}u_0  \neq 0$ is an eigenvector of $\tilde{T}(\kappa)$. 
The vector $P_{\kappa}u_0$ is decomposed into its real part $u_{\kappa}$ and imaginary part $v_{\kappa}$ as: $P_{\kappa}u_0=u_{\kappa}+i v_{\kappa}$. Consequently, since $\|P_{\kappa}u_0-u_0\|<1$, employing Proposition \ref{RCCorr} (iv) reveals that $u_{\kappa}$ is an eigenvector of $T(\kappa)$.
Furthermore,
\begin{align*}
\norm{u_{\kappa}}^2
&= \norm{u_{\kappa} - u_0}^2 + \norm{u_0}^2 + 2 \langle u_{\kappa}- u_0, u_0 \rangle \\
&= \norm{u_{\kappa} - u_0}^2 - 1 + 2 \langle u_{\kappa}, u_0 \rangle.
\end{align*}
Thus, by using the elementary inequality $2\sqrt{ab}\le a+b\ (a\ge 0, b\ge 0)$, we get
\begin{align*}
\frac{\langle u_{\kappa}, u_0 \rangle}{\norm{u_{\kappa}}}
= \frac{1}{2} \left(\, \norm{u_{\kappa}} + \frac{1 - \norm{u_{\kappa} - u_0}^2}{\norm{u_{\kappa}}} \,\right) \geq \sqrt{1 - \norm{u_{\kappa} - u_0}^2}.
\end{align*}
Therefore, if we define $\displaystyle v_{\kappa} \coloneqq \frac{u_{\kappa}}{\norm{u_{\kappa}}}$, then
\begin{align*}
\norm{ v_{\kappa} - u_0 }^2
&= \norm{v_{\kappa}}^2 + \norm{u_0}^2 - 2 \langle v_{\kappa}, u_0 \rangle = 2( 1 - \langle v_{\kappa}, u_0 \rangle ) \\
&\leq 2 \left( 1 - \sqrt{1 - \norm{u_{\kappa} - u_0}^2} \right) \\
&\leq 2 \left\{ 1 - \sqrt{1 - \left( \frac{c(\kappa)}{1 - c(\kappa)} \right)^2} \right\}.
\end{align*}
Hence, if $\displaystyle c(\kappa) < \frac{r \sqrt{1 - r^2/4}}{1 + r \sqrt{1 - r^2/4}}$, then $\norm{ v_{\kappa} - u_0 } < r$. This, together with Theorem \ref{StaPI}, concludes that $e^{-s T(\kappa)}$ is a positivity  improving operator for any $s \in (0, s_0]$.
\end{proof}

\begin{rem}

As illustrated in the corollary below, Theorem \ref{StaSG} essentially indicates that \(e^{-sT(\kappa)}\) is positivity-improving when \(s>0\) is sufficiently small. In  the context  of functional analysis, the application of the Perron--Frobenius theorem to heat semigroups typically involves demonstrating that the semigroup in question is positivity-preserving  for all \(s \ge 0\) and ergodic, thereby establishing the uniqueness of the lowest eigenvalue of its generator (see, e.g., \cite{Faris1972}). Note, however, that Theorem \ref{StaSG} does not address whether \(e^{-sT(\kappa)}\) is positivity-preserving for \(s > s_0\). Although the statement of the theorem is somewhat limited in that sense, it is nevertheless sufficient to conclude that the lowest eigenvalue of \(T(\kappa)\) is simple and that the corresponding eigenvector is strictly positive with respect to \(\mathcal{P}(u_0)\).

\end{rem}

\begin{cor}\label{StaSG2}\it 
Let $\mathcal{H}$ be a real Hilbert space, and let $T \colon \mathcal{H} \to \mathcal{H}$ be a lower semi-bounded self-adjoint operator. Assume that $\mu \coloneqq \inf \sigma(T)$ is a simple eigenvalue of $T$, and let $u_0 \in \mathrm{Ker}(T - \mu I)$ be a unit vector. 
Let $S \colon \mathcal{H} \to \mathcal{H}$ be a symmetric operator that is relatively $T$-bounded, meaning there exist $a, b \geq 0$ such that
\[
\norm{Su} \leq a \norm{Tu} + b \norm{u}
\]
for all $u \in \Dom T$.
Moreover, we also assume that there exists a  $\kappa_0 > 0$ such that $(T + \kappa S)_{\abs{\kappa} < \kappa_0}$ has a uniform spectral gap : 
	\begin{align*}
		\delta \coloneqq \inf_{\abs{\kappa} < \kappa_0} \dist\left(\mu(\kappa), \sigma(T + \kappa S)\setminus \{\mu(\kappa)\} \right) > 0,
	\end{align*}
where $\mu(\kappa)$ is the lowest eigenvalue of $T + \kappa S$.

Then, 
for any fixed $s_0 > 0$, any $s \in (0, s_0]$ and any $\abs{\kappa} < \kappa_0$ satisfying
\[
	\abs{\kappa} < \left\{ a + \frac{(\abs{\mu} + \ve)a + b}{\ve} \right\}^{-1} \frac{r \sqrt{1 - r^2/4}}{1 + r \sqrt{1 - r^2/4}},
\]
where $\ve = \frac{\delta}{2}$ and $r = \frac{1-(1-e^{-s_0 \delta})^2}{4\sqrt{2}(1+(1-e^{-s_0\delta})^2)}$, the semigroup $e^{-s (T + \kappa S)}$ is positivity-improving with respect to $\mathcal{P}(u_0)$.

\end{cor}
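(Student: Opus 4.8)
The plan is to obtain Corollary \ref{StaSG2} as a direct specialization of Theorem \ref{StaSG}, with the perturbation family chosen to be $S(\kappa) \coloneqq \kappa S$ for $\kappa$ ranging over an open interval $J$ containing $0$ and containing the relevant range $(-\kappa_0, \kappa_0)$. First I would verify the four hypotheses of Theorem \ref{StaSG}. Hypothesis (i) holds because relative $T$-boundedness of $S$ already presupposes $\Dom T \subset \Dom S = \Dom(\kappa S)$. For (ii), from $\norm{Su} \le a\norm{Tu} + b\norm{u}$ we get $\norm{S(\kappa)u} = \abs{\kappa}\norm{Su} \le \abs{\kappa}a\,\norm{Tu} + \abs{\kappa}b\,\norm{u}$, so one takes $a(\kappa) \coloneqq \abs{\kappa}a$ and $b(\kappa) \coloneqq \abs{\kappa}b$, which are continuous nonnegative functions on $J$. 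Hypothesis (iii), $a(0) = b(0) = 0$, is immediate, and hypothesis (iv) is exactly the uniform spectral gap assumption stated in the corollary, with $T(\kappa) = T + \kappa S$.

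Next I would substitute these choices into the constants of Theorem \ref{StaSG}. With $\ve = \delta/2$ as in both statements, the function $c(\kappa)$ of Theorem \ref{StaSG} becomes
\[
c(\kappa) = a(\kappa) + \frac{(\abs{\mu} + \ve)a(\kappa) + b(\kappa)}{\ve} = \abs{\kappa}\left\{ a + \frac{(\abs{\mu} + \ve)a + b}{\ve}\right\},
\]
while $\alpha = 1 - e^{-s_0\delta}$ and $r = \frac{1-\alpha^2}{4\sqrt{2}(1+\alpha^2)}$ coincide verbatim with the quantities in the corollary. Since the bracketed factor $a + \big((\abs{\mu}+\ve)a + b\big)/\ve$ is strictly positive (the trivial case $S=0$ being set aside), the hypothesis
\[
\abs{\kappa} < \left\{ a + \frac{(\abs{\mu} + \ve)a + b}{\ve}\right\}^{-1}\frac{r\sqrt{1 - r^2/4}}{1 + r\sqrt{1 - r^2/4}}
\]
is, after multiplying through by that factor, precisely the inequality $c(\kappa) < \frac{r\sqrt{1-r^2/4}}{1+r\sqrt{1-r^2/4}}$ required by Theorem \ref{StaSG}. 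Invoking that theorem then gives that $e^{-s(T+\kappa S)} = e^{-sT(\kappa)}$ is positivity-improving with respect to $\mathcal{P}(u_0)$ for every $s \in (0, s_0]$, which is the claim.

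Since this is a pure specialization, I do not anticipate any genuine obstacle; the only points worth a sentence are (a) the legitimacy of dividing by the bracketed factor and rearranging, noted above, and (b) the observation, already used inside the proof of Theorem \ref{StaSG}, that $\frac{r\sqrt{1-r^2/4}}{1+r\sqrt{1-r^2/4}} < 1$, so the standing inequality forces $c(\kappa) < 1$ and hence $a(\kappa) = \abs{\kappa}a < 1$; this is what makes $T + \kappa S$ self-adjoint via the Kato--Rellich theorem and ensures that $\mu(\kappa)$ is a well-defined simple eigenvalue. I would state the proof essentially in these few lines, referring the reader to Theorem \ref{StaSG} for all the analytic content.
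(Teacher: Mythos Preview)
Your proposal is correct and follows essentially the same route as the paper: define $S(\kappa)\coloneqq \kappa S$, $a(\kappa)\coloneqq a\abs{\kappa}$, $b(\kappa)\coloneqq b\abs{\kappa}$, observe that the condition on $\abs{\kappa}$ is exactly the condition $c(\kappa) < \frac{r\sqrt{1-r^2/4}}{1+r\sqrt{1-r^2/4}}$ of Theorem \ref{StaSG}, and invoke that theorem. Your additional remarks (a) and (b) are fine but not strictly needed for the corollary itself, since those points are already absorbed into the proof of Theorem \ref{StaSG}.
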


\begin{proof}
First, for any $\kappa \in \mathbb{R}$, define
\[
S(\kappa) \coloneqq \kappa S,
\quad
a(\kappa) \coloneqq a \abs{\kappa},
\quad
b(\kappa) \coloneqq b \abs{\kappa}.
\]
Now, let $\alpha, c(\kappa)$ be defined as in Theorem \ref{StaSG}. Then, the condition $\displaystyle c(\kappa) < \frac{r \sqrt{1 - r^2/4}}{1 + r \sqrt{1 - r^2/4}}$ is equivalent to $\displaystyle \abs{\kappa} < \left\{ a + \frac{(\abs{\mu} + \ve)a + b}{\ve} \right\}^{-1} \frac{r \sqrt{1 - r^2/4}}{1 + r \sqrt{1 - r^2/4}}$. 
Thus, the assertion follows from Theorem \ref{StaSG}.
\end{proof}

\begin{rem}
Theorem \ref{StaSG} and Corollary \ref{StaSG2}  are consistent with results obtained through Kato's perturbation theory \cite{Kato1995, Reed1978}.
\end{rem}

\begin{ex}\label{ex}

We consider the following real Hilbert space:
\[
\mathcal{H}=\left\{
f\in L^2(\R^d)\, \Big|\, f^*(-x)=f(x)\ \mbox{a.e. $x$}
\right\}.
\]
Here, $L^2(\R^d)$ is the complex Hilbert space consisting of all complex-valued square-integrable functions on $\R^d$.
In this example, we propose an application of Theorem \ref{StaSG} using the {\it  magnetic Schr\"{o}dinger operator:}
\[
H=(-i \nabla+e\a)^2+V,
\]
where $e\in \R$ is a parameter.
Throughout this example, we assume the following:
\begin{itemize}
\item[(i)] $V$ is a multiplication operator by a {\it real-valued even} function $V(x)$ on $\R^d$, and there exist constants $a\in [0, 1)$ and $b\ge 0$ such that the following holds:
\[
\|Vf \|\le a \|(-\Delta)f \|+b \|f\|\quad \mbox{for all $f\in \Dom(-\Delta)$}.
\]
\item[(ii)] The vector potential $\a=(a_1, \dots, a_d)$ satisfies the following conditions: for all $j=1, \dots, d$,
$a_j$ is a multiplication operator by a {\it real-valued even} function $a_j(x)$ on $\R^d$, and
$
a_j(x)\in C_0^{\infty}(\R^d)
$.
\end{itemize}
Now, setting
\[
H_0=-\Delta+V,
\]
by assumption (i) and Kato--Rellich's theorem in real Hilbert spaces (Theorem \ref{KTthm}), $H_0$ is a lower semi-bounded self-adjoint operator on $\Dom(-\Delta)$. By defining
\[
H_{\rm I}(e)=e(-i\nabla) \cdot \a+e \a\cdot (-i \nabla)+e^2 \a^2,
\]
we have $H=H_0+H_{\rm I}(e)$, and it follows from assumptions (i) and (ii) that $H_{\rm I}(e)$ is infinitesimally small with respect to $H_0$. Moreover, it is straightforward to verify that $H$ maps $\mathcal{H}$ to $\mathcal{H}$. Our third assumption is as follows:
\begin{itemize}
\item[(iii)] $\mu\coloneqq\inf \sigma(H_0)$ is an eigenvalue of $H_0$, and $H_0$ has a spectral gap: $\displaystyle \inf_{\lambda\in \sigma(H_0)\setminus \{\mu\}}|\lambda-\mu|>0$.
\end{itemize}
We now consider the real Hilbert space:
\[
L^2_{\rm real}(\R^d)=\left\{f\in L^2(\R^d)\, \Big|\, f(x)\in \R \ \mbox{a.e. $x$}\right\}.
\]
The natural Hilbert cone in $L^2_{\rm real}(\R^d)$:
\[
\mathcal{C}=\left\{f\in L^2_{\rm real}(\R^d)\, \Big|\, \mbox{$f(x) \ge 0$ a.e. $x$}\right\}
\]
is often used in applications. Indeed, it is known that $e^{-s H_0}$ is a positivity improving operator with respect to $\mathcal{C}$ for all $s>0$. See, for example, \cite[Theorem XIII.46]{Reed1978}. On the other hand, the heat semigroup $\{e^{-s H}\}_{s\ge 0}$ generated by the magnetic Schr\"{o}dinger operator $H$ is {\it not} even ergodic with respect to $\mathcal{C}$. This fact can be immediately seen from the Feynman--Kac formula for $e^{-s H}$. See, for example, \cite{Simon2005}.\footnote{
Alternatively, this can be readily proven from the fact that $L^2_{\rm real}(\R^d)$ is not invariant under the action of $H_{\rm I}(e)$.
}

Now, combining the aforementioned facts with the Perron--Frobenius theorem for $\mathcal{C}$, we conclude that the lowest eigenvalue $\mu$ of $H_0$ viewed as an operator on $L^2_{\rm real}(\R^d)$ is simple. However, since the complexification of $L^2_{\rm real}(\R^d)$ is $L^2(\R^d)$, according to (iv) of Proposition \ref{RCCorr}, the simplicity of the lowest eigenvalue still holds when considering $H_0$ as an operator on $L^2(\R^d)$.

Next, let us  consider $H_0$ as an operator on $\mathcal{H}$. Since the complexification of $\mathcal{H}$ is also $L^2(\R^d)$, combining (iv) of Proposition \ref{RCCorr} with the previous analysis, we conclude that the lowest eigenvalue of $H_0$ viewed as an operator on $\mathcal{H}$ is simple. Therefore, denoting the corresponding normalized eigenvector as $\varphi$, by Proposition \ref{BaseProp} and Theorem \ref{StaSG}, there exists some $e_0>0$ such that for $|e|<e_0$, {\it both} heat semigroups $e^{-s H_0}$ and $e^{-s H}$ are positivity improving with respect to $\mathcal{P}(\varphi)$, provided  $s$ is small enough.

\end{ex}

\appendix

\section{Operators on Real Hilbert Spaces and Their Corresponding Operators on Complexified Hilbert Spaces}\label{SecApp}

The properties of operators on real Hilbert spaces can be explored in correspondence with operators extended to complexified Hilbert spaces, allowing for a discussion that closely parallels the case of operators on complex Hilbert spaces. While this correspondence may seem readily deducible, there appears to be a scarcity of readily accessible literature offering a complete demonstration.
Hence, in this appendix, for the convenience of the reader, we provide an overview of fundamental aspects concerning the aforementioned correspondence utilized in this paper.

Let $\mathcal{H}$ be a real Hilbert space, and let $T \colon \mathcal{H} \to \mathcal{H}$ be a densely defined operator. Then, the operator $T^* \colon \mathcal{H} \to \mathcal{H}$ defined as follows is called the adjoint operator of $T$:
\begin{align*}
    \mathrm{Dom}\, T^*
    &\coloneqq \left\{\, v \in \mathcal{H} \mid \text{There exists } w \in \mathcal{H} \text{ such that } \langle v, Tu \rangle = \langle w, u \rangle \text{ for all } u \in \mathrm{Dom}\, T \,\right\}, \\
    T^*v
    &\coloneqq w 
    \quad (\,  v \in \mathrm{Dom}\, T^* \,).
\end{align*}
If $T \subset T^*$, $T$ is called a symmetric operator; if $T^* = T$, $T$ is called a self-adjoint operator.

\begin{prop}\label{BasicA}\it 
Let $\mathcal{H}$ be a real Hilbert space, and let $T \colon \mathcal{H} \to \mathcal{H}$ be a densely defined operator. Then, $\mathrm{Ker}\, T^* = (\mathrm{Ran}\, T)^{\perp}$ holds. Here, $\mathrm{Ran}\, T$ denotes the range of $T$.
\end{prop}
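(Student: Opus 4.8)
The plan is a straightforward double-inclusion argument that simply unwinds the definition of the adjoint. The only structural point worth isolating first is that $T^*$ is genuinely a (single-valued) operator: if $v \in \mathcal{H}$ and there exist $w_1, w_2 \in \mathcal{H}$ with $\langle v, Tu \rangle = \langle w_1, u \rangle = \langle w_2, u \rangle$ for all $u \in \Dom T$, then $\langle w_1 - w_2, u \rangle = 0$ on the dense set $\Dom T$, hence $w_1 = w_2$. This is the one place the density hypothesis enters, and it legitimizes writing $T^*v = w$.

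For the inclusion $\Ker T^* \subset (\Ran T)^{\perp}$, I would take $v \in \Ker T^*$. By definition $v \in \Dom T^*$, so there is $w \in \mathcal{H}$ with $\langle v, Tu \rangle = \langle w, u \rangle$ for all $u \in \Dom T$, and $w = T^*v = 0$. Hence $\langle v, Tu \rangle = 0$ for every $u \in \Dom T$, i.e. $v \perp \Ran T$.

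For the reverse inclusion $(\Ran T)^{\perp} \subset \Ker T^*$, I would take $v \in (\Ran T)^{\perp}$, so $\langle v, Tu \rangle = 0 = \langle 0, u \rangle$ for all $u \in \Dom T$. This exhibits $v \in \Dom T^*$ with the associated vector $w = 0$, which by the well-definedness observed above forces $T^*v = 0$, i.e. $v \in \Ker T^*$. Combining the two inclusions gives $\Ker T^* = (\Ran T)^{\perp}$.

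There is no real obstacle here; the statement is purely definitional. The only item requiring the slightest care is the single-valuedness of $T^*$ (equivalently, that ``$w$'' is uniquely determined by $v$), which is exactly why $T$ is assumed densely defined, and it should be pointed out explicitly in the write-up. Everything else is a direct translation between ``$\langle v, Tu\rangle = 0$ for all $u$'' and ``$v \in \Dom T^*$ with $T^*v = 0$.''
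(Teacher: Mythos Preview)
Your proof is correct and follows essentially the same double-inclusion argument as the paper's own proof; the paper's version is slightly terser, omitting your explicit remark on the well-definedness of $T^*$ via density, but the substantive steps are identical.
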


\begin{proof}
First, we prove $\mathrm{Ker}\, T^* \subset (\mathrm{Ran}\, T)^{\perp}$. For any $u \in \mathrm{Ker}\, T^*$ and any $v \in \mathrm{Dom}\, T$, we have $\langle u, Tv \rangle = \langle T^*u, v \rangle = 0$. Thus, $u \in (\mathrm{Ran}\, T)^{\perp}$. Next, we prove $(\mathrm{Ran}\, T)^{\perp} \subset \mathrm{Ker}\, T^*$. For any $u \in (\mathrm{Ran}\, T)^{\perp}$ and any $v \in \mathrm{Dom}\, T$, we have $\langle u, Tv \rangle = 0$. Hence, $u \in \mathrm{Dom}\, T^*$ and $T^*u = 0$, which implies $u \in \mathrm{Ker}\, T^*$.
\end{proof}

\begin{prop}\label{SABI}\it 
	Let $\mathcal{H}$ be a real Hilbert space, and $T \colon \mathcal{H} \to \mathcal{H}$ be a symmetric operator.
	If there exists $\lambda \in \mathbb{R}$ such that $T - \lambda I$ is surjective, then $T$ is a self-adjoint operator.
\end{prop}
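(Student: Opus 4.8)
The plan is to show that $\Ran(T-\lambda I) = \mathcal{H}$ together with symmetry forces $T = T^*$. Since $T$ is symmetric we already have $T \subset T^*$, so it remains to prove $\Dom T^* \subset \Dom T$ and that $T^*$ agrees with $T$ there. First I would record the easy consequence of surjectivity: because $T$ is symmetric, $T-\lambda I$ is injective. Indeed if $(T-\lambda I)u = 0$ then for every $v \in \Dom T$ we have $\langle u, (T-\lambda I)v\rangle = \langle (T-\lambda I)u, v\rangle = 0$, and since $T-\lambda I$ is surjective this means $u \perp \mathcal{H}$, hence $u = 0$. (Alternatively one can invoke Proposition \ref{BasicA} applied to $T-\lambda I$, noting $(T-\lambda I)^* \supset T-\lambda I$.) So $T - \lambda I \colon \Dom T \to \mathcal{H}$ is a bijection.

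Next, take any $v \in \Dom T^*$ and set $w \coloneqq (T^* - \lambda I)v \in \mathcal{H}$. By surjectivity of $T - \lambda I$ there exists $u \in \Dom T$ with $(T-\lambda I)u = w$. Since $u \in \Dom T \subset \Dom T^*$ and $T^*$ extends $T$, we also have $(T^* - \lambda I)u = w$. Therefore $(T^* - \lambda I)(v - u) = 0$, i.e. $v - u \in \Ker(T^* - \lambda I)$. The key step is to identify this kernel: by Proposition \ref{BasicA} applied to the densely defined operator $T - \lambda I$ (whose adjoint is $T^* - \lambda I$, since $\lambda$ is real), we get
\[
\Ker(T^* - \lambda I) = \big(\Ran(T - \lambda I)\big)^{\perp} = \mathcal{H}^{\perp} = \{0\},
\]
using the surjectivity hypothesis. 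Hence $v = u \in \Dom T$, which shows $\Dom T^* \subset \Dom T$ and hence $\Dom T^* = \Dom T$; combined with $T \subset T^*$ this gives $T = T^*$.

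The argument is short and the only place requiring a little care is making sure the adjoint of $T - \lambda I$ is exactly $T^* - \lambda I$ — this holds because $\lambda I$ is everywhere-defined and bounded (and real, so self-adjoint), so the standard identity $(T - \lambda I)^* = T^* - \lambda I$ with unchanged domain applies. There is no real obstacle beyond this bookkeeping; the substantive input is Proposition \ref{BasicA}, which converts the surjectivity assumption directly into triviality of $\Ker(T^* - \lambda I)$.
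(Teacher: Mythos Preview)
Your proof is correct and follows essentially the same route as the paper: pick an element of $\Dom T^*$, use surjectivity of $T-\lambda I$ to find a matching element in $\Dom T$, and then invoke Proposition~\ref{BasicA} to conclude $\Ker(T^*-\lambda I)=(\Ran(T-\lambda I))^\perp=\{0\}$, forcing the domains to coincide. Your additional remarks on injectivity of $T-\lambda I$ and on the identity $(T-\lambda I)^*=T^*-\lambda I$ are fine but not strictly needed for the argument.
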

\begin{proof}
	From the surjectivity of $T - \lambda I$, for any $u \in \mathrm{Dom} \, T^*$, there exists $v \in \mathrm{Dom} \, T$ such that $(T^* - \lambda I)u = (T - \lambda I)v$.
	Hence, $(T^* - \lambda I)(u-v) = 0$.
	Since $T^* - \lambda I$ is injective by Proposition \ref{BasicA}, we have $u-v = 0$.
	Consequently, $\mathrm{Dom} \, T^* \subset \mathrm{Dom} \, T$, i.e., $T^* = T$.
\end{proof}

\begin{defi}
	Let $\mathcal{H}$ be a real Hilbert space, and $\tilde{\mathcal{H}}$ be its complexification. For a linear operator $T \colon \mathcal{H} \to \mathcal{H}$, we define a linear operator $\tilde{T} \colon \tilde{\mathcal{H}} \to \tilde{\mathcal{H}}$ as follows:
	\[
		\mathrm{Dom} \, \tilde{T} \coloneqq \mathrm{Dom} \, T + i \mathrm{Dom} \, T, \quad \tilde{T}(u+iv) \coloneqq Tu + iTv \quad (\,  u, v \in \mathrm{Dom} \, T \,).
	\]
\end{defi}

\begin{prop}\label{RCCorr}\it 
	Let $\mathcal{H}$ be a real Hilbert space, and $T \colon \mathcal{H} \to \mathcal{H}$ be a linear  operator. Then, the following statements hold:
	\begin{enumerate}
		\item[\rm (i)] $\mathrm{Dom} \, T$ is dense in $\mathcal{H}$ if and only if  $\mathrm{Dom} \, \tilde{T}$ is dense  in $\tilde{\mathcal{H}}$. Moreover,  $(\tilde{T})^* = (T^*)^{\sim}$ holds.
		
		\item[\rm (ii)] $\mathrm{Ran} \, T$  is dense in $\mathcal{H}$ if and only if  $\mathrm{Ran} \, \tilde{T}$ is dense in $\tilde{\mc{H}}$.
		
		\item[\rm (iii)] $T$ is  injective if and only if  $\tilde{T}$ is  injective.
		
		\item[\rm (iv)] For any $\lambda \in \mathbb{R}$, $\lambda$ is  an eigenvalue of $T$ if and only if  $\lambda$  is an eigenvalue of $\tilde{T}$. Moreover, $\mathrm{Ker} \, (\tilde{T} - \lambda I) = \mathrm{Ker} \, (T - \lambda I) + i \, \mathrm{Ker} \, (T - \lambda I)$ holds.
		
		\item[\rm (v)] $T$ is  bounded if and only if $\tilde{T}$ is  bounded. Moreover, $\| \tilde{T} \| = \| T \|$ holds.
		
		\item[\rm (vi)] $T$  is  symmetric (resp. self-adjoint) if and only if  $\tilde{T}$ is  symmetric (resp. self-adjoint).
		
		\item[\rm (vii)] Let $T$ be a symmetric operator and $\gamma \in \mathbb{R}$. Then, $T \geq \gamma I$ is equivalent to $\tilde{T} \geq \gamma I$.
	\end{enumerate}
\end{prop}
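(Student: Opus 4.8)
The plan is to handle parts (i)--(vii) by first setting up a small ``dictionary'' between the real and complex settings and then reducing each item to it. Fix the standard model $\tilde{\mathcal{H}} = \mathcal{H} + i\mathcal{H}$ with $\langle u_1 + iv_1,\, u_2 + iv_2\rangle = \langle u_1, u_2\rangle + \langle v_1, v_2\rangle + i\big(\langle u_1, v_2\rangle - \langle v_1, u_2\rangle\big)$, and recall from the definition that $\mathrm{Dom}\,\tilde{T} = \mathrm{Dom}\,T + i\,\mathrm{Dom}\,T$ with $\tilde{T}$ acting componentwise, whence also $\mathrm{Ran}\,\tilde{T} = \mathrm{Ran}\,T + i\,\mathrm{Ran}\,T$. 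The elementary fact driving (i) and (ii) is that for any $M \subset \mathcal{H}$ the closure of $M + iM$ in $\tilde{\mathcal{H}}$ is $\overline{M} + i\overline{M}$; taking $M = \mathrm{Dom}\,T$ and $M = \mathrm{Ran}\,T$ gives the two density equivalences at once. I would also record the ``dictionary'' lemmas that follow immediately from the componentwise description of domains and actions: $\widetilde{T - \lambda I} = \tilde{T} - \lambda I$ for $\lambda \in \mathbb{R}$, and $S \subseteq T \iff \tilde{S} \subseteq \tilde{T}$ (with $S = T$ on one side iff on the other).

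For the second assertion of (i) I would compute $(\tilde{T})^*$ by hand. Given $v_1 + iv_2 \in \tilde{\mathcal{H}}$, the condition $\langle v_1 + iv_2,\, \tilde{T}(u_1 + iu_2)\rangle = \langle w_1 + iw_2,\, u_1 + iu_2\rangle$ for all $u_1, u_2 \in \mathrm{Dom}\,T$ splits, upon expanding both sides with the inner product formula and separating real and imaginary parts, into the scalar identities obtained by taking $u_2 = 0$ and then $u_1 = 0$; these say exactly that $\langle v_j, Tu\rangle = \langle w_j, u\rangle$ for all $u \in \mathrm{Dom}\,T$ and $j = 1, 2$. Hence $v_1 + iv_2 \in \mathrm{Dom}(\tilde{T})^*$ iff $v_1, v_2 \in \mathrm{Dom}\,T^*$, and then $(\tilde{T})^*(v_1 + iv_2) = T^*v_1 + iT^*v_2 = (T^*)^{\sim}(v_1 + iv_2)$. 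Keeping the four real inner products in order and checking that the cross terms cancel correctly is the one genuinely fiddly point, and I expect this adjoint computation to be the main (if modest) obstacle; everything else is a line or two.

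Parts (iii), (iv), (v) are then short. For (iii), $\tilde{T}(u + iv) = 0 \iff Tu = 0$ and $Tv = 0$, so $\mathrm{Ker}\,\tilde{T} = \mathrm{Ker}\,T + i\,\mathrm{Ker}\,T$, which is $\{0\}$ iff $\mathrm{Ker}\,T = \{0\}$. For (iv), apply (iii) to $T - \lambda I$ via $\widetilde{T - \lambda I} = \tilde{T} - \lambda I$, giving $\mathrm{Ker}(\tilde{T} - \lambda I) = \mathrm{Ker}(T - \lambda I) + i\,\mathrm{Ker}(T - \lambda I)$, which contains a nonzero vector iff $\mathrm{Ker}(T - \lambda I) \neq \{0\}$. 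For (v), from $\|\tilde{T}(u + iv)\|^2 = \|Tu\|^2 + \|Tv\|^2 \le \|T\|^2(\|u\|^2 + \|v\|^2) = \|T\|^2\,\|u + iv\|^2$ one gets $\|\tilde{T}\| \le \|T\|$, while restricting $\tilde{T}$ to real vectors gives $\|\tilde{T}\| \ge \|T\|$; so one side is bounded iff the other is, with equal norms.

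Finally, (vi) follows by combining (i) with the dictionary: $T \subseteq T^* \iff \tilde{T} \subseteq (T^*)^{\sim} = (\tilde{T})^*$ and likewise with equality, so symmetry and self-adjointness transfer in both directions. For (vii), assume $T$ symmetric; then for $u + iv \in \mathrm{Dom}\,\tilde{T}$,
\[
\langle u + iv,\, \tilde{T}(u + iv)\rangle = \langle u, Tu\rangle + \langle v, Tv\rangle + i\big(\langle u, Tv\rangle - \langle v, Tu\rangle\big),
\]
and the imaginary part vanishes since $\langle u, Tv\rangle = \langle Tu, v\rangle = \langle v, Tu\rangle$ by symmetry. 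Thus $\langle u + iv,\, \tilde{T}(u + iv)\rangle = \langle u, Tu\rangle + \langle v, Tv\rangle \ge \gamma(\|u\|^2 + \|v\|^2) = \gamma\,\|u + iv\|^2$ whenever $T \ge \gamma I$, and the converse comes again from restricting to real vectors.
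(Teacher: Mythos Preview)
Your proof is correct and follows essentially the same approach as the paper: both reduce the density statements (i)--(ii) to the fact that $\overline{M+iM}=\overline{M}+i\overline{M}$, compute $(\tilde T)^*$ by hand via real/imaginary decomposition for the second half of (i), derive (iii)--(iv) from the componentwise description of kernels, use the Pythagorean identity for (v), and handle (vi)--(vii) as immediate consequences of (i) together with the quadratic-form computation. Your explicit remark that the imaginary part in (vii) vanishes by symmetry is a small clarification the paper leaves implicit, but otherwise the arguments coincide.
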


\begin{proof}
The first half of (i) and (ii) are immediate: 
In general, for any subspace $\mathcal{D}$ of $\mathcal{H}$, $\mathcal{D}$ being dense in $\mathcal{H}$ is equivalent to $\mathcal{D} + i \mathcal{D}$ being dense in $\tilde{\mathcal{H}}$.

To prove the latter half of (i),
consider any $u, v \in \Dom T^*$ and any $x, y \in \Dom T$.
\begin{align*}
\langle u+iv, \tilde{T}(x+iy) \rangle
&= \langle u+iv, Tx+iTy \rangle\\
&= \langle u, Tx \rangle + \langle v, Ty \rangle + i(\langle u, Ty \rangle - \langle v, Tx \rangle)\\
&= \langle T^*u, x \rangle + \langle T^*v, y \rangle + i(\langle T^*u, y \rangle - \langle T^*v, x \rangle)\\
&= \langle T^*u+iT^*v, x+iy \rangle\\
&= \langle (T^*)^{\sim}(u+iv), x+iy \rangle.
\end{align*}
Hence, $(T^*)^{\sim} \subset (\tilde{T})^*$.
Conversely, for any $u+iv \in \Dom (\tilde{T})^*$, let $x+iy \coloneqq (\tilde{T})^*(u+iv)$.
For any $w \in \Dom T$, we have
\begin{align*}
\langle x, w \rangle - i \langle y, w \rangle
=& \langle x+iy, w \rangle
= \langle (\tilde{T})^*(u+iv), w \rangle
= \langle u+iv, \tilde{T}w \rangle\\
= &\langle u+iv, Tw \rangle
= \langle u, Tw\rangle -i \langle v, Tw \rangle.
\end{align*}
Thus, $\langle x, w \rangle = \langle u, Tw\rangle$ and $\langle y, w \rangle = \langle v, Tw \rangle$.
As $w$ is arbitrary in $\Dom T$, $u,  v \in \Dom T^*$ and $T^*u = x, \ T^*v = y$.
Thus, we obtain $(\tilde{T})^* \subset (T^*)^{\sim}$.
Hence, we have shown $(\tilde{T})^* = (T^*)^{\sim}$.

(iii) If $\tilde{T}$ is injective, it is straightforward to show that $T$ is injective.
Conversely, assuming $T$ is injective, we need to show that $\tilde{T}$ is injective.
Suppose $\tilde{T}(u+iv) = 0$, i.e., $Tu + i Tv = 0$.
This implies $Tu = Tv = 0$.
Since $T$ is injective, $u=v=0$, hence $u+iv = 0$.

For the first part of (iv), we can apply (iii) to $T-\lambda I$.
To prove $\Ker(\tilde{T} - \lambda I) = \Ker(T - \lambda I) + i \Ker(T - \lambda I)$,
it is  immediate to see $\Ker(T - \lambda I) + i \Ker(T - \lambda I) \subset \Ker(\tilde{T} - \lambda I)$, so we prove $\Ker(\tilde{T} - \lambda I) \subset \Ker(T - \lambda I) + i \Ker(T - \lambda I)$.
For any $u + iv \in \Ker(\tilde{T} - \lambda I)$, we have $Tu + i Tv = \tilde{T}(u+iv) = \lambda(u+iv) = \lambda u + i \lambda v$.
Thus, $Tu = \lambda u, \ Tv = \lambda v$, implying $u, v \in \Ker(T - \lambda I)$.

For (v), if $T$ is bounded, for any $u, v \in \Dom T$,
\[
\norm{\tilde{T}(u+iv)}^2 = \norm{Tu + i Tv}^2=\norm{Tu}^2 + \norm{Tv}^2 \leq \norm{T}^2(\norm{u}^2 + \norm{v}^2) = \norm{T}^2 \norm{u+iv}^2.
\]
Thus, $\tilde{T}$ is bounded with $\norm{\tilde{T}} \leq \norm{T}$. Conversely,  if $\tilde{T}$ is bounded, then trivially $T$ is bounded, and $\norm{T} \leq \norm{\tilde{T}}$ holds.
Hence, we obtain $\left\|\tilde{T}\right\| = \norm{T}$.

(vi) is immediate from (i).

For (vii), if $\tilde{T} \geq \gamma I$, then $T \geq \gamma I$ is evident.
For the converse, assuming $T \geq \gamma I$, for any $u, v \in \Dom T$,
\begin{align*}
\langle u+iv, \tilde{T}(u+iv) \rangle
&= \langle u+iv, Tu+i Tv \rangle
= \langle u, Tu \rangle + \langle v, Tv \rangle\\
&\geq \gamma (\langle u, u \rangle + \langle v, v \rangle)
= \gamma \langle u+iv, u+iv \rangle.
\end{align*}
Thus, $\tilde{T} \geq \gamma I$.
\end{proof}

\begin{defi}\label{Real_Hilbert_Operator_Spectrum}
	Let $\mathcal{H}$ be a real Hilbert space, and $T \colon \mathcal{H} \to \mathcal{H}$ be a self-adjoint operator.
	Define the resolvent and spectrum of $T$ as those of $\tilde{T}$:
	$\rho(T) = \rho(\tilde{T})$ and $\sigma(T) = \sigma(\tilde{T})$.
\end{defi}

Proposition \ref{RCCorr} implies that various properties concerning the spectrum of self-adjoint operators on real Hilbert spaces completely correspond to those of the extended operators on complexified Hilbert spaces. Thus, calculations based on spectral decompositions are feasible for self-adjoint operators on real Hilbert spaces as well.

\begin{thm}[Kato--Rellich Theorem in Real Hilbert Spaces]\label{KTthm}\it 
Let $\mathcal{H}$ be a real Hilbert space, and let $T \colon \mathcal{H} \to \mathcal{H}$ be a lower semi-bounded self-adjoint operator. 
Moreover, let $S \colon \mathcal{H} \to \mathcal{H}$ be a symmetric operator which is $T$-bounded with a relative bound less than $1$. 
Then, $T+S$ is lower semi-bounded and self-adjoint.
\end{thm}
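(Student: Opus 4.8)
The plan is to reduce the self-adjointness assertion to Proposition~\ref{SABI}. Since $T$-boundedness forces $\Dom T \subset \Dom S$, the operator $T+S$ is well defined on $\Dom(T+S)=\Dom T$ and is obviously symmetric, so it suffices to produce a single real $\lambda$ for which $(T+S)-\lambda I$ is surjective. The device, as in the classical Kato--Rellich argument, is the factorization
\[
T+S-\lambda=\bigl(I+S(T-\lambda)^{-1}\bigr)(T-\lambda)\qquad\text{on }\Dom T,
\]
valid for any $\lambda\in\rho(T)$; if the bounded operator $S(T-\lambda)^{-1}$ has norm $<1$, then $I+S(T-\lambda)^{-1}$ is invertible in $\mathcal{B}(\mathcal{H})$ by a Neumann series, and, $T-\lambda$ being a bijection of $\Dom T$ onto $\mathcal{H}$, so is the left-hand side.

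The core of the argument is therefore the estimate $\|S(T-\lambda)^{-1}\|<1$ for $\lambda$ sufficiently negative. Put $m\coloneqq\inf\sigma(T)>-\infty$. For real $\lambda<m$ one has $\lambda\in\rho(T)$ and, by the functional calculus for the self-adjoint operator $T$ — legitimate on a real Hilbert space because its spectral theory is that of the complexification $\tilde T$ (Proposition~\ref{RCCorr} and Definition~\ref{Real_Hilbert_Operator_Spectrum}) — the bounds
\[
\bigl\|(T-\lambda)^{-1}\bigr\|\le\frac{1}{m-\lambda},\qquad \bigl\|T(T-\lambda)^{-1}\bigr\|\le\max\Bigl\{1,\ \tfrac{|m|}{m-\lambda}\Bigr\}.
\]
In particular $\|T(T-\lambda)^{-1}\|\le1$ once $\lambda$ is negative enough. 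Combining this with the relative bound $\|Su\|\le a\|Tu\|+b\|u\|$ (with $a<1$) applied to $u=(T-\lambda)^{-1}x$ gives
\[
\bigl\|S(T-\lambda)^{-1}\bigr\|\le a\,\bigl\|T(T-\lambda)^{-1}\bigr\|+b\,\bigl\|(T-\lambda)^{-1}\bigr\|\le a+\frac{b}{m-\lambda},
\]
which is $<1$ for all $\lambda$ below a suitable threshold $\lambda_0$. For such $\lambda$ the factorization shows that $(T+S)-\lambda I$ is a bijection of $\Dom T$ onto $\mathcal{H}$, and Proposition~\ref{SABI} yields that $T+S$ is self-adjoint.

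For lower semi-boundedness, observe that for every $\lambda\le\lambda_0$ the inverse $(T+S-\lambda)^{-1}=(T-\lambda)^{-1}\bigl(I+S(T-\lambda)^{-1}\bigr)^{-1}$ is bounded, so $(-\infty,\lambda_0]\subset\rho(T+S)$; hence $\sigma(T+S)$ is bounded below, and since $T+S$ is self-adjoint the spectral theorem (again via the complexification and Proposition~\ref{RCCorr}(vii)) gives $\langle u,(T+S)u\rangle\ge\lambda_0\|u\|^2$ for all $u\in\Dom(T+S)$. The step I expect to require the most care is justifying the functional-calculus bounds on $(T-\lambda)^{-1}$ and $T(T-\lambda)^{-1}$ in the real setting; this is, however, purely a matter of transporting the complex spectral theorem through the correspondence $T\leftrightarrow\tilde T$ already set up in the appendix, not a genuine difficulty. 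As an alternative one may bypass the direct argument entirely: complexify, check by Cauchy--Schwarz that $\tilde S$ is $\tilde T$-bounded with relative bound $\le a<1$ (using $\|\tilde S(u+iv)\|^2=\|Su\|^2+\|Sv\|^2$ and $\|(T+S)^{\sim}(u+iv)\|^2=\|(T+S)u\|^2+\|(T+S)v\|^2$), invoke the classical Kato--Rellich theorem for $\tilde T+\tilde S=(T+S)^{\sim}$, and translate back via Proposition~\ref{RCCorr}(vi),(vii).
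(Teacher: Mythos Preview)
Your proof is correct and follows essentially the same route as the paper's own argument: both establish symmetry of $T+S$ on $\Dom T$, use the factorization $T+S-\lambda=(I+S(T-\lambda)^{-1})(T-\lambda)$ together with the relative bound to force $\|S(T-\lambda)^{-1}\|<1$ for $\lambda$ sufficiently negative, invoke Proposition~\ref{SABI} for self-adjointness, and read off lower semi-boundedness from the fact that such $\lambda$ lie in $\rho(T+S)$. Your version is somewhat more explicit about the functional-calculus bounds and adds the alternative complexification route, but neither constitutes a genuinely different approach.
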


\begin{proof} 
The proof of this theorem closely resembles that of \cite[Theorem X. 12]{Reed1975}. For the reader's convenience, we provide a brief outline of the proof below.
By the inclusion $\Dom T \subset \Dom S$, it follows that $\Dom(T+S) = \Dom T$. Hence, $\Dom(T+S)$ is dense, and $T+S$ is symmetric. Also, by the assumption on $S$, there exist $a \in [0, 1)$ and $b \geq 0$ such that for all $u \in \Dom T$,
	$
		\norm{Su} \leq a \norm{Tu} + b \norm{u}
	$ holds.
Thus, for any sufficiently small $\lambda \in \R$ and any $u \in \mc{H}$,
	\begin{align*}
		\norm{S(T-\lambda I)^{-1}u}
		\leq& a \norm{T(T-\lambda I)^{-1}u} + b \norm{(T-\lambda I)^{-1}u}\\
		\leq& \left\{ a\norm{T(T-\lambda I)^{-1}} + b\norm{(T-\lambda I)^{-1}} \right\} \norm{u}.
	\end{align*}
Since $T$ is lower semi-bounded, there exists $\lambda_0$ such that for $\lambda<\lambda_0$,
$\norm{S(T-\lambda I)^{-1}} < 1$. Thus, $T+S - \lambda I = \{I + S(T-\lambda I)^{-1}\}(T-\lambda I)$ is bijective.
By Proposition \ref{SABI}, $T+S$ is self-adjoint. Also, since $\lambda\in \rho(T+S)$, $T+S$ is bounded from below.
\end{proof}



\end{document}